\newtheorem{theorem}{Theorem}
\newtheorem*{definition*}{Definition}
\newtheorem{lemma}[theorem]{Lemma}
\newtheorem{prop}[theorem]{Proposition}
\newtheorem{proposition}[theorem]{Proposition}
\newtheorem*{theorem*}{Theorem}
\newtheorem*{prop*}{Proposition}
\newtheorem{conjecture}[theorem]{Conjecture}
\newtheorem{observation}{Observation}
\newtheorem*{rem*}{Remark}
\theoremstyle{definition}
\theoremstyle{Theorem A}
\theoremstyle{Theorem B}
\newtheorem*{thmA}{Theorem A}
\newtheorem*{thmB}{Theorem B}
\DeclareMathOperator{\Crit}{{\rm Crit}}
\DeclareMathOperator{\diam}{{\rm diam}}
\DeclareMathOperator{\HD}{{\rm HD}}
\DeclareMathOperator{\hyp}{{\rm hyp}}
\newcommand{\N}{\mathbb{N}}
\newcommand{\C}{\mathbb{C}}
\def\ov{\overline}
\def\un{\underline}
\def\1{1\!\!\text{{\rm 1}}}
\title[Dimension of Julia sets]{On Hausdorff dimension of polynomial not totally disconnected Julia sets}
\author[F. Przytycki]{Feliks Przytycki$^\dag$} \address{$\dag$ Institute of Mathematics, Polish Academy of Sciences, ul. \'{S}niadeckich 8, 00-656 Warszawa, Poland}
\email{feliksp@impan.pl}
\author[A. Zdunik]{Anna Zdunik$^\ddag$} \address{$^\ddag$ Institute of Mathematics, University of Warsaw, ul. Banacha 2, 02-097 Warszawa, Poland}
\email{A.Zdunik@mimuw.edu.pl}
\thanks{The research of A. Zdunik was supported in part by the National Science Centre, Poland, grant no 2018/31/B/ST1/02495.}
\thanks{The research of F. Przytycki was supported in  part by the National Science Centre, Poland, grant no 2019/33/B/ST1/00275.}
\begin{document}
\maketitle
\begin{abstract}
 We prove that for every polynomial of one complex variable
of degree at least 2 and Julia set not being totally disconnected
nor a circle, nor interval, Hausdorff dimension of this Julia set is larger than 1.
Till now this was known only in the connected Julia set case.

We give also an example of a polynomial with non-connected but not totally disconnected 
Julia set and such that all its 
 components comprising of more than single points are analytic arcs, thus resolving a question by
  Christopher Bishop, who asked whether every
such component must have Hausdorff dimension larger than 1.
 \end{abstract}

\section{Introduction}\label{intro}

In the sequel we refer to connected components of subsets of the complex plane $\mathbb C$
that are not singletons (sets comprising of single points) as non-trivial components.

\medskip

Christopher  Bishop in \cite{bishop}, commenting on his result on the existence of an entire transcendental function with
one-dimensional Julia set, asked the following question:

\emph{The connected components of the Julia set constructed in this paper are all
either points or continua of Hausdorff dimension one.(...)
However, the situation for polynomials is
open. If a polynomial Julia set is connected, then it is either a generalized
circle/segment or has Hausdorff dimension strictly greater than 1 (this
follows from work of Zdunik [51] and Przytycki [36]). Is this also true of
the non-trivial connected components when the Julia set is disconnected?
In other words, if J(p) is disconnected, its every connected component is
either a point or a set of Hausdorff dimension strictly greater than 1?}

\

We recall that for every polynomial $p$ of degree at least 2, the Julia set $J(p)$
is the boundary of the basin of attraction to $\infty$ under the action by $p$.

\

In this article, we  answer the above   questions and related ones.
We start with key

\begin{theorem}\label{thm:main}
Let $f:\mathbb C\to\mathbb C$ be a polynomial of degree $d\ge 2$. If  the Julia set $J(f)$ is disconnected then
every non-trivial connected component $J'$ of $J(f)$ is eventually periodic, i.e. $f^\ell(J')$ is periodic with period $k$ for some integers $\ell, k$. Furthermore, either
$f^\ell(J')$  is an analytically
embedded interval (i.e. analytic arc),
 and $f^k$ on it is analytically conjugate to \;  $\pm$ Chebyshev polynomial, or $J'$ has Hausdorff dimension,
 and even hyperbolic dimension, greater than $1$.
\end{theorem}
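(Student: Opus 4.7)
The plan is twofold. First, reduce to the case where $J'$ itself is periodic under $g := f^k$, by establishing eventual periodicity of non-trivial components of $J(f)$. Second, on the periodic component, realise $g$ as a polynomial-like map of some degree $d' \geq 2$ with Julia set $J'$, and apply the polynomial-like analogue of the connected-case dichotomy cited in the introduction.

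For the eventual periodicity, I would combine two ingredients. First, for every $\varepsilon > 0$ only finitely many components of $J(f)$ have diameter $\geq \varepsilon$, which follows from standard modulus/normal-families estimates applied to the uniformisation of $A_\infty$ and its pullbacks. Second, if $J'$ is non-trivial then $\liminf_n \diam(f^n(J')) > 0$: otherwise, pulling $J'$ back along the orbit by inverse branches whose images have diameter tending to $0$ would, by Koebe-type distortion control, force $J'$ itself to collapse to a point, contradicting non-triviality. Together these force the forward orbit $\{f^n(J')\}$ in the component space of $J(f)$ to be finite, hence $J'$ eventually periodic.

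Having replaced $J'$ by $f^\ell(J')$, assume $g(J') = J'$. Choose a Jordan domain $U \supset J'$ disjoint from every other component of $J(f)$ and from critical orbits of $g$ not asymptotic to $J'$, and let $U'$ be the component of $g^{-1}(U)$ containing $J'$. For a suitable choice of $U$, $g : U' \to U$ is polynomial-like of degree $d' \geq 2$ with filled-in Julia set $K$ satisfying $\partial K = J'$; the degree is at least $2$ because a degree-$1$ polynomial-like map has a one-point Julia set and $J'$ is non-trivial. Now invoke the polynomial-like extension of the connected-case theorem, applied directly to $(g, J')$: working with the intrinsic pressure function $t \mapsto P(-t\log|g'|)$ on the repeller $J'$ avoids any dimension loss that could accompany Douady--Hubbard straightening. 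The output is that either $\HD_{\hyp}(J') > 1$, or $J'$ is a quasi-circle on which $g$ is quasi-conformally conjugate to $z \mapsto z^{d'}$, or $J'$ is a quasi-arc on which $g$ is quasi-conformally conjugate to $\pm T_{d'}$.

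The main obstacle is handling the two rigid alternatives. The quasi-circle case can be excluded under the disconnectedness hypothesis: a round-circle $J'$ would force $g$, and hence $f$, to be globally conjugate to a power map, yielding $J(f) = J'$ connected; a non-round quasi-circle has $\HD > 1$ by Bowen's dynamical rigidity theorem, placing us in the first alternative. In the quasi-arc case, either $\HD(J') > 1$, or Zdunik's measure-theoretic rigidity applies: the pullback of the measure of maximal entropy from the Chebyshev model must then coincide with normalised $1$-dimensional Hausdorff measure on $J'$, which forces the quasi-conformal conjugacy to upgrade to a real-analytic one, identifying $J'$ with an analytic arc on which $g$ acts as $\pm T_{d'}$. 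I expect this last rigidity upgrade, together with the verification that the polynomial-like construction is genuinely available in the non-trivial periodic setting, to be the most delicate parts of the argument.
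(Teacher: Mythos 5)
There is a genuine gap at the very first step, the eventual periodicity of non-trivial components. Your second ingredient --- that $\liminf_n \diam(f^n(J'))>0$ because otherwise ``pulling $J'$ back along the orbit by inverse branches whose images have diameter tending to $0$ would, by Koebe-type distortion control, force $J'$ to collapse'' --- does not work: the inverse branches of $f^n$ along the orbit of a non-trivial component are not univalent on any definite neighbourhood, precisely because the forward orbit of such a component must pass through critical points (if it never did, the component would indeed be a point, by the divergence of the moduli of the surrounding annuli). Controlling the pullbacks through the critical nest is exactly the content of the Branner--Hubbard conjecture, proved by Qiu--Yin and Kozlovski--van Strien (Theorem~A of the paper, known earlier only in degree $3$); the paper takes this as a black box, and it cannot be replaced by a Koebe argument. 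Your first ingredient (only finitely many components of $J(f)$ of diameter $\ge\varepsilon$) is likewise not a ``standard modulus/normal-families estimate'': it is of the same depth, and in any case the combination of (i) and (ii) is essentially a restatement of the theorem you are trying to prove. The correct move is simply to invoke Theorem~A.

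Two further points on the second half. Your choice of a Jordan domain $U\supset J'$ ``disjoint from every other component of $J(f)$'' is impossible: since $\bigcup_n f^{-n}(z)$ is dense in $J(f)$ for any $z\in J(f)$, components of $J(f)$ distinct from the invariant one accumulate on $J'$. The paper's Lemma~\ref{lem:poly_like} circumvents this: one takes $U=U_m(C)$, which does contain other components, and shows that for $m$ large they all eventually leave under iteration, so the filled-in Julia set of the polynomial-like restriction is still exactly $C$. Finally, the ``polynomial-like analogue of the connected-case dichotomy'' you invoke is not something you can cite off the shelf in the quasi-circle/quasi-arc form you state and then upgrade via Zdunik's rigidity; the paper's substitute is the RB-domain framework applied to $\Omega=\widehat{\mathbb C}\setminus C$ together with Theorem~B of \cite{feliks}, which directly yields ``$\HD_{\hyp}(\partial\Omega)>1$ or $\partial\Omega$ is an analytic circle or arc'' without straightening (your instinct to avoid straightening because it can destroy dimension is correct). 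The analytic-circle alternative is then excluded, as you also propose, but via Schwarz reflection into the bounded complementary component and Brolin's rigidity lemma rather than an appeal to Bowen.
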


This theorem is proved in Section~\ref{proof_thm1}. An example is presented at Figure \ref{basilicas}.

\begin{figure}[h]
\includegraphics[height=6cm]{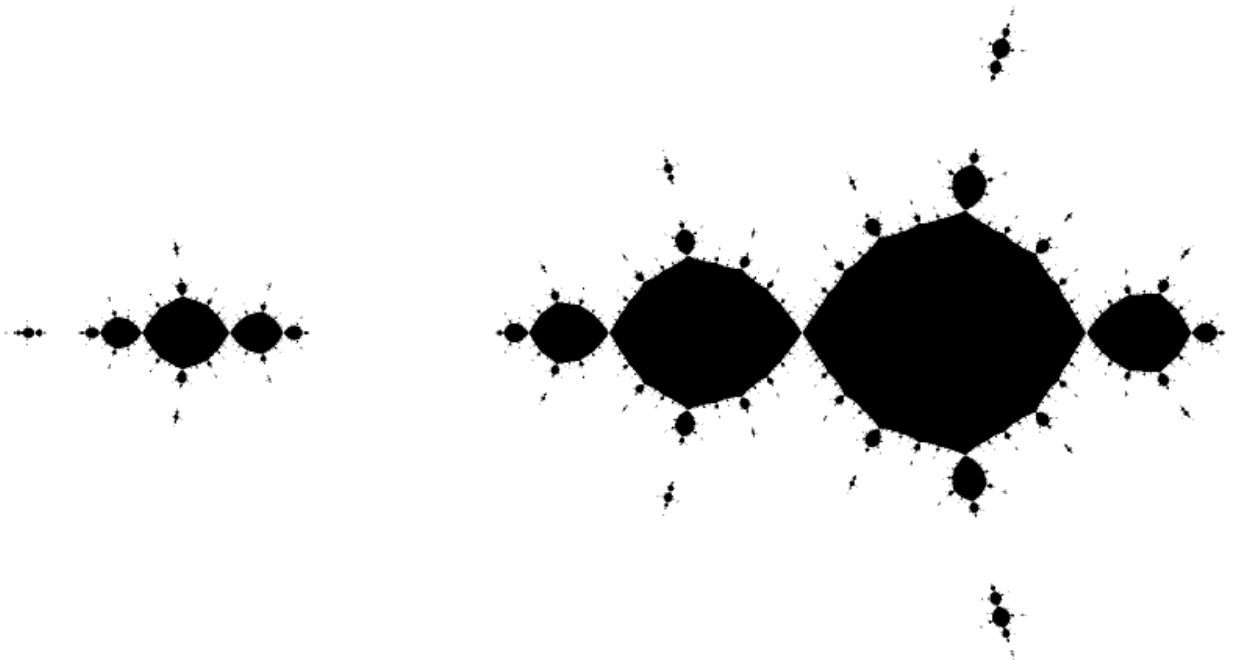}
\caption{Mandelbrot's basilicas: disconnected Julia set for $f(z)= a z^3 +z^2 - b$ with $a\approx .215$
and
$b\approx 1.455$. The critical point 0 has period 2. It is inside the biggest basilica which has $J'$
as its (fractal) boundary. Compare $f=f_{\varepsilon,\beta}$ in Section \ref{sec:example}, where $f^2(0)$ is a fixed point, an end point of $J'$ which is an interval there.}
\label{basilicas}
\end{figure}

In Section ~\ref{above} we complete the picture, including possible analytic components and proving the following theorem, which is the main result of this note.

\begin{theorem}\label{thm:HDabove} Let $f:\mathbb C\to\mathbb C$ be a polynomial of degree $d\ge 2$,  such that $J(f)$ is  not  totally disconnected. Assume also that $f$
is not complex affine conjugate to a map $z\mapsto z^d$ or a $\pm$ Chebyshev polynomial.
Then $\HD(J(f))>1$. Even more, the hyperbolic dimension, $\HD_{\hyp} (J(f))$ is greater than $1$.
\end{theorem}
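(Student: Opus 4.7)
The plan is to split into two cases based on connectedness of $J(f)$, reducing to Theorem~\ref{thm:main} and classical results.

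If $J(f)$ is connected, then under our hypotheses it is neither a circle nor an interval, and the classical theorems of Zdunik and Przytycki cited in the introduction yield $\HD_{\hyp}(J(f)) > 1$ directly.

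If $J(f)$ is disconnected, I would pick a non-trivial component $J'$ of $J(f)$ (which exists since $J(f)$ is not totally disconnected) and apply Theorem~\ref{thm:main}. If the output is $\HD_{\hyp}(J') > 1$ we are done. Otherwise $J'_0 := f^\ell(J')$ is an analytic arc of period $k$ and $f^k|_{J'_0}$ is analytically conjugate to $\pm T_{d_0}$ for some $d_0 \ge 2$. I claim the arc-degree satisfies $d_0 < d^k$: if equality held, all $f^k$-preimages of $J'_0$ would lie on $J'_0$, making $J'_0$ a non-trivial totally invariant closed set under $f^k$, so $J(f) = J'_0$ would be an interval and $f$ would be Chebyshev, contradicting the hypothesis. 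Hence for any $z_0$ in the interior of $J'_0$ that is not a critical value of $f^k$, the $d^k$ inverse branches of $f^k$ on a small univalent disk $U \ni z_0$ split into $d_0$ \emph{on-arc} branches and at least one \emph{off-arc} univalent branch $\psi$, whose image lies compactly in $\mathbb{C} \setminus J'_0$ and is uniformly contracting by Koebe's distortion theorem.

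To conclude, I would build a conformal iterated function system on $U$ combining (i) a large family of on-arc inverse-branch compositions obtained from a Markov partition of $J'_0 \cap U$ adapted to the Chebyshev model and kept away from the post-critical endpoints, with attractor on the arc of Hausdorff dimension arbitrarily close to $1$; and (ii) the transverse off-arc branch $\psi$. The attractor $\Lambda \subset J(f)$ is a hyperbolic repeller for a suitable iterate $f^{kN}$, and by Bowen's formula its Hausdorff dimension is the unique $t^\ast$ with $P_\Lambda(-t^\ast \log |(f^{kN})'|) = 0$. The on-arc-only subsystem has pressure zero at $t$ approaching $1$ (the arc being $1$-dimensional), while adding $\psi$ strictly raises the pressure at every $t$ by an amount bounded below in terms of the contraction of $\psi$. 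This shifts the pressure zero to $t^\ast > 1$, giving $\HD_{\hyp}(J(f)) \ge \HD(\Lambda) > 1$. The main obstacle is the Chebyshev critical points interior to $J'_0$, which prevent a direct uniformly-expanding coding of the entire arc; this is handled by approximating with expanding sub-repellers of $J'_0$ whose dimensions saturate $1$ and ensuring that the transverse pressure gain from $\psi$ survives the saturation limit.
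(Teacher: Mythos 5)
Your overall decomposition is the paper's: the connected case is classical, and in the disconnected case Theorem~\ref{thm:main} reduces everything to a periodic component that is an analytic arc carrying Chebyshev-like dynamics, after which one combines the on-arc inverse branches with a transverse branch to push the pressure at $t=1$ above zero (this is exactly the Inoquio-Renteria--Rivera-Letelier scheme the paper follows, and your degree argument $d_0<d^k$ is the paper's Remark~2 after Lemma~\ref{lem:poly_like}). Two points, however, need repair. First, your off-arc branch $\psi$ does not map the small disk $U\ni z_0$ into itself: its image lies near some \emph{other} preimage component of $J'_0$, in general far from $U$, so item (ii) is not a branch of an iterated function system on $U$. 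One must post-compose with a further inverse branch returning into $U$ (or, as the paper does in Lemma~\ref{lem:other_branch}, work on the large polynomial-like domain $U_m\supset C$ and find a whole component $V$ of $f^{-N_1}(U_m)$ with $\overline V\subset U_m\setminus U_{m+1}$); likewise the on-arc branches must be applicable to the image of the transverse branch, which again forces the large domain rather than a Markov neighbourhood of a sub-repeller inside a small disk.

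Second, the ``saturation limit'' is the crux of your argument and is only asserted. What is needed is that the transverse gain is uniform while the pressure deficit of the sub-repellers vanishes: if $X_\epsilon\subset J'_0$ is an expanding sub-repeller with $\HD(X_\epsilon)=1-\epsilon'$, its pressure at $t=1$ is of order $-\epsilon'\chi_\epsilon$, so you must check that the Lyapunov exponents $\chi_\epsilon$ stay bounded (true here, since $|(f^k)'|$ is bounded on the arc) and that inserting the transverse branch at arbitrary positions raises the pressure by at least a constant independent of $\epsilon$ (a binomial count as in the paper's Proposition~\ref{prop:pressure_large}); you must also verify that the combined branches are mutually distinct, so that the count is a genuine lower bound for $\sum_{f^N y=x}|(f^N)'(y)|^{-1}$. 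The paper sidesteps the limit entirely via Lemma~\ref{lem:key}: lifting the arc by the Zhukovsky map to an expanding map near an analytic Jordan curve, it proves the uniform bound $\sum_{F^n y=x}|(F^n)'(y)|^{-1}\ge e^{-C_0}$ for the \emph{full} (non-uniformly expanding) arc dynamics, which reduces the combinatorics to a one-line binomial identity. Your route can likely be made to work, but as written these two steps are gaps rather than proofs.
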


\noindent

Recall that  hyperbolic dimension of the Julia set $J(f)$, analogously of any other $f$-invariant subset of $J(f)$,
$\HD_{\rm hyp}(J(f))$ is  defined as supremum of Hausdorff dimensions of isolated hyperbolic forward invariant subsets of $J(f)$. A compact set $L\subset \mathbb C$ is hyperbolic (or expanding) forward invariant for $f$ if $f(L)\subset L$ and there exists $n\in \mathbb N$ such that $|(f^n)'(z)|>1$ for all $z\in L$. The set $L$ is said to be \emph{isolated} (or repelling) if there exists its neighbourhood $U$ in $J(f)$ such that if a trajectory $(f^j(z))_{j\ge 0}$ is in $U$, then it is in $L$.

\noindent Thus, the inequality
$$\HD(J(f))\ge \HD_{\rm hyp}(J(f))$$
holds trivially.
See \cite[Chapter 12]{pubook} for the discussion on equivalent definitions of the hyperbolic dimension,
in particular the one we shall use here, introduced in Proposition \ref{prop:bowen}.

\smallskip

So, in view of Theorem \ref{thm:HDabove}, if $J(f)$ is disconnected but it has a non-trivial connected component, then $\HD_{\hyp} (J(f))>1$, even
if the dimension of every non-trivial component is equal to one.
In the latter case the dimension larger than $1$ is achieved
due to a Cantor set of other components.
To prove it, we follow a strategy by Irene
Inoquio-Renteria and Juan Rivera-Letelier in \cite{InoRiv}, where they proved
that geometric pressure $P(t)$  is larger than $-t$ times Lyapunov exponent of any probability
invariant measure on the Julia set $J(f)$ of a rational function on the Riemann sphere
$f:\widehat{\mathbb{C}}\to\widehat{\mathbb{C}}$ (for the definition see Section \ref{proof_thm1}, provided the exponent is positive. See Proposition~\ref{prop:bowen} for the definition of the geometric pressure and
Remark 12.5 in \cite{pubook}.

Theorem~\ref{thm:HDabove} strengthens the result of \cite{zdunik2}, where the analogous theorem was proved for polynomials with connected Julia set.

\

Finally, we show in Section ~\ref{sec:example} that the situation we deal with in the proof of Theorem~\ref{thm:HDabove} really may happen:
in Proposition~\ref{prop:interval_components} we provide an example of a family of polynomials of degree $3$ with
non-trivial  components of the  Julia set  $J(f)$, each of which is an analytically embedded interval.
This resolves Christopher Bishop's question quoted above in the negative.

\

{\bf Remark.} \ One can also consider \emph{radial} (or \emph{conical}) Julia set $J_{\rm r}(f)\subset J(f)$ which consists of all those points $z\in J(f)$  for which there exists $\delta>0$ such that for infinitely many $n\in \mathbb N$ the map $f^n$ admits a holomorphic inverse branch $f^{-n}:B=B(f^n(z),\delta)\to \mathbb C$ sending $f^n(z)$ to $z$, for the Euclidean balls $B$ in the case  $f$ is a polynomial and balls in the spherical metric for $f$ a rational function.
Then
$\HD_{\hyp} (J(f))=\HD(J_{\rm r}(f))$, see e.g. \cite[Section 3]{dfsu} for this, in the setting including also rational and meromorphic functions, and see references therein; other versions of conical Julia sets for rational functions were introduced in \cite{feliks_conical}. Therefore in Theorem \ref{thm:HDabove} we can write
\begin{equation}
\HD(J_{\rm r}(f))>1.
\end{equation}

\bigskip

\section{Basic notions, repelling boundary domains and proof of Theorem~\ref{thm:main}}\label{proof_thm1}

For a polynomial $f:\mathbb C\to\mathbb C$ of degree $d\ge 2$ one considers the set of points escaping to infinity
$$
A_\infty(f):=\{z\in \mathbb C: f^n(z)\to \infty \ {\rm as}\ n\to\infty\}.
$$
Here $f^n=f^{\circ n}=f\circ f ...\circ f$ $n$-times the composition of $f$. It is clear that this is exactly the set of points with the forward orbits $(f^n(z))_{n=0,1,...}$ unbounded. The set $A_\infty(f)$ is called the \emph{basin of attraction to infinity}. It is open and connected (since otherwise its bounded component would contain a pole for an iterate of $f$ by Maximum Principle). Define the filled-in Julia set: $K(f):=\mathbb C \setminus A_\infty(f)$ and Julia set $J(f): =\partial K(f)=\partial A_\infty(f)$ where $\partial$ means the boundary,  as in the Introduction. Clearly both $K(f)$ and $J(f)$ are completely invariant, namely $f^{-1}(K(f))=K(f)$ and $f^{-1}(J(f))=J(f)$.

A point $z\in\mathbb C$ is said to be \emph{critical} (or $f$-\emph{critical}) if
the derivative $f'(z)$ is equal to 0. The basin $A_\infty(f)\cup \infty$ is simply-connected, that is $J(f)$ be connected, if and only if the forward trajectories of all critical points are bounded.
At the other extreme if all critical points escape to the infinity then  $J(f)$ is totally disconnected. However the opposite implication is false!. For more details see e.g. \cite[Section III.4.]{cg}, and \cite{Brolin} in particular Theorem 3.8 and an example following it.

Another definition of Julia set is that it is the complement of Fatou set
${\tt F}(f)$ on which the sequence
$f^n$ is normal, that is for every its subsequence and a compact set $K\subset {\tt F}(f)$  a   subsubsequence of it is uniformly convergent on $K$. This definition is valid also for
a rational function $f:\widehat{\mathbb C}\to \widehat{\mathbb C}$ 
acting on the Riemann sphere $\widehat{\mathbb C}=\mathbb C \cup \infty$.
The Julia set $J(f)$ is compact, completely $f$-invariant, namely $f^{-1}(J(f))=J(f)$, and a closure of repelling periodic orbits contained in it. For entire transcendental functions $f$ the definition is the same.

Intuitively, the action of $f$ on $J(f)$
is `chaotic'  and at points in $J_{\rm r}(f)$ yields a similarity of infinitesimal parts of $J(f)$ with its big pieces, thus exhibiting a fractal local geometry/nature of $J(f)$.



\bigskip

We go back to a polynomial $f$.
Choose $R$ so large that $K(f)$ is a subset of $\mathbb D(0,R))$,
the Euclidean disc with radius $R$ and origin at $0$,
and moreover
\begin{equation}\label{eq:large_R}
\overline {f^{-1}(\mathbb D(0,R))}\subset \mathbb D(0,R).
\end{equation}
Then, by its complete invariance, $K(f)\subset \bigcap_{n=0}^\infty f^{-n}(\mathbb D(0,R))$.
Directly by the definition of $K(f)$ the opposite inclusion holds, hence the equality holds.

Let $C$ be a connected component of  the filled-in Julia set $K(f)$.
Denote by $U_n(C)$ the unique connected component of $f^{-n}(\mathbb D(0,R))$ containing $C$.
The boundary $\partial U_n(C)$ is disjoint from $K(f)$ since $\partial \mathbb D(0,R)$ is, and $f(K(f))=K(f)$.
Notice also that by \eqref{eq:large_R}
\begin{equation}\label{rel-compact}
\overline{U_{n+1}(C)}\subset U_n(C).
\end{equation}
Finally notice that all $U_n$ are simply-connected (topological discs) by Maximum Principle.

\smallskip

Define
$
C':=\bigcap_{n=0}^\infty U_n(C).
$
By $U_n(C)\supset C$ we get $C'\supset C$.
On the other hand note that $C'$ is connected as the intersection
of the decreasing sequence of compact connected sets $\overline{U_n(C)}$.
It is also clear from the definition of $C'$ that it consists of non-escaping points, hence it is contained in $K(f)$. Therefore $C'\subset C$.
We conclude with
$$
C=\bigcap_{n=0}^\infty U_n(C).
$$

One fact pivotal for our paper is

\begin{thmA}[Qiu \& Yin \cite{qiu}, Kozlovski \& van Strien \cite{strien}]
For a polynomial $f$ of degree at least $2$
if a component
of filled-in Julia set $K(f)$ is not a point, then its forward orbit contains a periodic component containing a critical point.


\end{thmA}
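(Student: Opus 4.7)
My plan is to argue by contradiction, using the nested simply-connected neighbourhoods $U_n(C)\supset U_{n+1}(C)\supset\cdots\supset C$ from the discussion above as a puzzle-like structure around any component $C$ of $K(f)$. Assume $C$ is not a single point but that no periodic component in its forward orbit $C_m:=f^m(C)$ contains a critical point of $f$; the goal is to derive a contradiction. The basic observation is that $f:U_{n+1}(C_m)\to U_n(C_{m+1})$ is a proper holomorphic map between topological discs of degree at most $d$, and it is ramified only at critical points of $f$ that fall into $U_{n+1}(C_m)$; since $f$ has at most $d-1$ critical points with multiplicity, only finitely many of these pull-backs are ramified once $n$ is chosen large enough.

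For the eventually periodic case, assume some $C_m$ is periodic of period $k$ and, by hypothesis, none of $C_m,\dots,C_{m+k-1}$ meets the critical set of $f$. Choosing $n$ so large that $U_n(C_{m+j})$ is disjoint from all critical points for $j=0,\dots,k-1$, the first-return map $f^k:U_{n+k}(C_m)\to U_n(C_m)$ is an unramified proper map between simply-connected domains, hence a conformal isomorphism. Composing with the inclusion $U_{n+k}(C_m)\subset U_n(C_m)$ gives a holomorphic self-map of $U_n(C_m)$ whose image is relatively compact, and the Schwarz--Pick lemma produces a definite hyperbolic contraction on any fixed compact subset. Iterating, the hyperbolic diameter of $C_m=\bigcap_{j\ge 0} U_{n+jk}(C_m)$ inside $U_n(C_m)$ is zero, so $C_m$ is a single point, contradicting the non-triviality of $C$.

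For the wandering case in which the $C_m$ are pairwise distinct, each critical point $c$ of $f$ lying in $K(f)$ belongs to a unique component, so $c\in U_n(C_m)$ for at most one value of $m$ and only once $n$ is sufficiently large. Thus, looking at the annuli $A_n(C_m):=U_n(C_m)\setminus\overline{U_{n+1}(C_m)}$, which are genuine topological annuli by \eqref{rel-compact}, only finitely many of the pull-back steps $f:A_{n+1}(C_{m-1})\to A_n(C_m)$ are ramified. Starting from a base annulus $A_0(C_{m_0})$ of definite modulus, Gr\"otzsch's inequality propagates this modulus up the orbit with a bounded cumulative loss due to the finitely many ramified steps, producing a uniform lower bound $\mathrm{mod}\,A_n(C)\ge \delta>0$ along a subsequence of $n$. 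Therefore $\sum_n \mathrm{mod}\,A_n(C)=\infty$, and the Branner--Hubbard lemma on divergent sums of moduli of nested annuli forces $C=\bigcap_n U_n(C)$ to reduce to a single point, again a contradiction.

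The main obstacle is a fully rigorous implementation of the wandering case. The breezy ``bounded cumulative loss'' statement has to be organised into a genuine \emph{critical tableau} that records, for every orbit $(C_m)$ and every level $n$, when and where critical points enter the neighbourhoods $U_n(C_m)$, and controls how the associated modulus losses interact. Branner and Hubbard carried this out by hand in the cubic case; for arbitrary degree, and in particular in the presence of \emph{persistently recurrent} critical orbits, one needs the much deeper puzzle-piece and rigidity machinery developed by Kozlovski--van Strien and, independently, by Qiu--Yin, which is exactly the contribution quoted as Theorem~A.
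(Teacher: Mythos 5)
First, note that the paper does not prove Theorem~A at all: it is quoted as an external deep result (Qiu--Yin \cite{qiu}, Kozlovski--van Strien \cite{strien}), with the explicit remark that beyond degree $3$ it ``stayed not proven for a long time''. So there is no in-paper proof to match; the only overlap is that your periodic case coincides with the paper's Remark~1 after Lemma~\ref{lem:poly_like} (no critical point in $U_{n+1}$ forces the annuli $U_n\setminus\overline{U_{n+1}}$ to have equal positive moduli, whence $C$ is a point). That part of your argument is essentially correct, modulo the small step that $C_m$ being a singleton forces the earlier components $C,\dots,C_{m-1}$ to be singletons too (they are connected subsets of the finite set $f^{-m}(C_m)$).

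The wandering case, however, contains a genuine gap, and it is exactly where the content of the theorem lies. Your key claim --- that since each critical point lies in a unique component of $K(f)$, ``only finitely many of the pull-back steps $f:A_{n+1}(C_{m-1})\to A_n(C_m)$ are ramified'' --- is false. For a fixed $m$ one indeed has $c\notin U_n(C_m)$ for all large $n$, but there is no uniformity in $m$: a critical point $c$ whose own component $C(c)$ separates from $C_m$ only at depth $n(m)$, with $\sup_m n(m)=\infty$, produces infinitely many ramified steps in the doubly-indexed array $(n,m)$. This is precisely the recurrence phenomenon that the Branner--Hubbard tableau records, and in the \emph{persistently recurrent} case the Gr\"otzsch bookkeeping does not close up at all: one cannot get a uniform lower bound on $\mathrm{mod}\,A_n(C)$, and establishing divergence of $\sum_n\mathrm{mod}\,A_n(C)$ (or circumventing it) is what required the enhanced nest of Kozlovski--van Strien and the Kahn--Lyubich covering lemma in Qiu--Yin's proof. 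Your closing paragraph concedes this and defers the missing step to exactly the cited theorem, so the proposal is circular at the decisive point: it proves the elementary half and restates, rather than proves, the hard half.
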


We refer to periodic components that contain a critical point as \emph{critical periodic components}.

\smallskip

In the proof of Theorem \ref{thm:HDabove} we shall use in fact a weaker version of Theorem A, that if the Julia set of a polynomial is not totally disconnected then there exists a non-trivial critical periodic component of $K(f)$.

\smallskip

Theorem A
was proved
for degree 3 polynomials by Branner and Hubbard \cite{BraHub} and independently by Yoccoz, but for higher degree polynomials stayed not proven
for a long time.

\smallskip

Consider any non-trivial  component of $K(f)$. We can replace it by a periodic component $C$ which is its image under an iterate of $f$,  i.e., $f^k(C)=C$ for some $k\ge 1$. Since replacing $f$ by $f^k$ does not change the Julia set, we may assume and we do assume from now on that $f(C)=C$.

\begin{definition*}[polynomial-like maps]\label{def:poly_like}
A polynomial-like map is a triple $(U,U',\Phi)$, where $U$ and $U'$ are open subsets of $\mathbb C$
homeomorphic  to a disc, with $U'$ relatively compact in $U$ and $\Phi:U'\to U$ holomorphic, proper,
of degree $\deg \Phi \ge 2$.

\emph{Proper}
means that preimage of every compact set is compact. 

\emph{Degree} is the number of preimages
$\# (\Phi^{-1}(z))$ for an arbitrary $z\in U$
 counted with multiplicities.
It does not depend on $z$.
\end{definition*}

The notion \emph{polynomial-like} was introduced by Adrien Douady and John Hubbard in \cite{dh}, Section I.1.
See also \cite[Section VI.1]{cg} or e.g. \cite[Section 7.1]{bf}, where
a smoothness or analyticity of $\partial U$ were additionally assumed. This allowed to extend $\Phi$
continuously to $\partial U'$ and define \emph{proper} by $\Phi(\partial U')=\partial U$. One can always achieve this analyticity by a slight decreasing of the Douady and Hubbard $U$ and $U'$, see \cite[Remark 7.2]{bf}. In our case we adjust $R$, see below.

Note that degree above is well defined for $U'$ and $U$ being open connected domains not necessarily simply connected (topological discs). This will be used in the proof of Lemma \ref{lem:poly_like}).

\medskip

For a polynomial-like $\Phi$ the filled-in Julia set and the Julia set
can be defined in the same way as for a polynomial, namely
\begin{equation}
K(\Phi)=\{z\in U': \Phi^n(z)\in U' \ \hbox{for all}\ \ n\in\mathbb N\} \ \ {\rm and} \  \  J(\Phi)=\partial K(\Phi).
\end{equation}

\medskip


Back to our component $C$, for  an arbitrary $n\in\mathbb N$, \ $F:=f|_{U_{n+1}(C)}$ maps $U_{n+1}(C)$ onto $U_{n}(C)$ (the same $C$). So since $F$ is proper and due to \eqref{rel-compact},  the triple
$(U_{n}(C), U_{n+1}(C), F)$ is
a polynomial-like map.
 The boundaries of the domain and range are analytic if $\partial {\mathbb D}(0,R)$ is taken disjoint from the forward orbits of critical points. We shall use the latter property to guarantee that for every
 $n\in\mathbb N$,  each two distinct components of $f^{-n}({\mathbb D}(0,R))$ have disjoint closures.

\begin{lemma}\label{lem:poly_like}
If $C$ is a fixed by $f$ non-trivial critical component of $K(f)$, then for each $n$ large enough
the map $F$ defined above is polynomial-like, with
$C$ being its  filled-in Julia set.
\end{lemma}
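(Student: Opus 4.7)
The plan is to verify the polynomial-like hypotheses directly from the construction, and then identify $K(F)$ with $C$ by combining the Douady--Hubbard criterion for connectedness of the filled-in Julia set of a polynomial-like map with the obvious fact that $K(F)\subset K(f)$. First I would check, for every $n\ge 0$, that $(U_n(C), U_{n+1}(C), F)$ meets the polynomial-like definition: both $U_n(C)$ and $U_{n+1}(C)$ are topological discs by the Maximum Principle already noted; the compact inclusion $\overline{U_{n+1}(C)}\subset U_n(C)$ is \eqref{rel-compact}; $F$ is obviously holomorphic; properness of $F$ follows from that of $f$ together with $f(\partial U_{n+1}(C))\subset \partial U_n(C)$, which keeps $f$-preimages in $U_{n+1}(C)$ of compacta in $U_n(C)$ away from $\partial U_{n+1}(C)$. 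Finally, since $C$ is critical it contains some $f$-critical point $c$, and $c\in C\subset U_{n+1}(C)$ is then a critical point of $F$, forcing $\deg F\ge 2$.

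Next I would choose $n$ so large that $U_{n+1}(C)$ contains no critical point of $f$ lying outside $C$. This is possible because the compacta $\overline{U_m(C)}$ form a decreasing sequence with intersection $\bigcap_m U_m(C)=C$, so they converge to $C$ in the Hausdorff metric, while $f$ has only finitely many critical points. For such $n$ the critical points of $F$ are precisely the $f$-critical points lying in $C$; because $f(C)=C\subset U_{n+1}(C)$, the forward $F$-orbit of each of them remains inside $C$, and hence every critical point of $F$ belongs to $K(F)$.

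The hard part will be to conclude that $K(F)=C$. Here I would invoke the polynomial-like analogue \cite{dh} of the classical fact that a polynomial's filled-in Julia set is connected iff all its critical orbits are bounded: since all critical points of $F$ lie in $K(F)$, the set $K(F)$ is connected. On one hand, every $z\in K(F)$ has bounded $f$-orbit, so $K(F)\subset K(f)$; on the other, $f(C)=C\subset U_{n+1}(C)$ yields $C\subset K(F)$. Thus $K(F)$ is a connected subset of $K(f)$ meeting the component $C$, and must therefore be contained in $C$. Combining gives $K(F)=C$, completing the argument.
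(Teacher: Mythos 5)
Your argument is correct, and it reaches the conclusion by a genuinely different route from the paper. The paper's proof shows that for $n$ large enough $U_{n+1}(C)$ is the \emph{only} component of $f^{-1}(U_n(C))$ meeting $U_n(C)$: an extra component $V$ with $\overline V\subset U_n$ would force $U_n\setminus(\overline V\cup\overline{U_{n+1}})$ to contain a critical point, by an Euler-characteristic (Riemann--Hurwitz) count for the covering of the annulus $U_{n-1}\setminus\overline{U_n}$, and finiteness of $\Crit(f)$ rules this out for large $n$; then $F^{-k}(U_n)=U_{n+k}$ gives $K(F)=\bigcap_k U_{n+k}=C$ directly. You instead choose $n$ so that the only $f$-critical points in $U_{n+1}(C)$ are those in $C$ (same use of finiteness of $\Crit(f)$ and of $\bigcap_m U_m=C$), observe that these all lie in $K(F)$, invoke the Douady--Hubbard connectivity criterion to get $K(F)$ connected, and conclude by maximality of the component $C$ inside $K(f)$. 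Both arguments are sound. The paper's version is more elementary and self-contained (no appeal to straightening), and its "captured critical point" observation is reused afterwards in Remark 1 to show that a non-trivial fixed component is automatically critical; your version is shorter but imports the connectivity criterion for polynomial-like maps from \cite{dh} (which the paper does use elsewhere, so this is a legitimate dependency). One small point to make explicit if you write this up: the containment $f(\partial U_{n+1}(C))\subset\partial U_n(C)$ that you use for properness rests on $U_{n+1}(C)$ being a full connected component of $f^{-1}(U_n(C))$, which deserves a line of justification.
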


Though $F$ is polynomial-like for each $n$, there is no reason to expect that $K(F)=C$; only $K(F)\supset C$ must hold, since forward trajectories of points in $C$ do not leave $U_{n+1}(C)$ by the forward invariance of
$C$. So Lemma \ref{lem:poly_like} is not void.

\begin{proof} Let us write here $U_n$ for $U_n(C)$ for all $n$.
We claim that for $n$ large enough
$U_{n+1}$ is  the only connected component of $f^{-1}(U_n)$ contained in $U_n$. No other component
intersects $U_n$.

\medskip

Indeed, if a component $V$ of $f^{-1}(U_n)$ intersects $U_n$, it must have its closure entirely in $U_n$.  Otherwise $\overline{V}$
 would intersect $\partial U_n$. Taking the images under $f^{n+1}$  we would conclude that
 $\overline{{\mathbb D}(0,R)}\cap f(\partial{\mathbb D}(0,R))\not=\emptyset$ contradicting \eqref{eq:large_R}.

 Suppose now, there is a component $V$ of $f^{-1}(U_n)$ with closure entirely in $U_n$, different from $U_{n+1}$. Then there is a critical point of $f$ in
 $W':=U_n\setminus (\overline{V}\cup \overline{U_{n+1}})$.
Otherwise $f:W'\to U_{n-1}\setminus \overline{U_{n}}=W$  would be a covering map from $W'$ to the annulus $W$.
This contradicts the equality $\chi(W')=\deg(f|_{W'}) \chi(W)$ for Euler's characteristics since $\chi(W')$ is negative and
$\chi(W)=0$.  See Figure \ref{critical}.

\

\begin{figure}
\includegraphics[height=5cm]{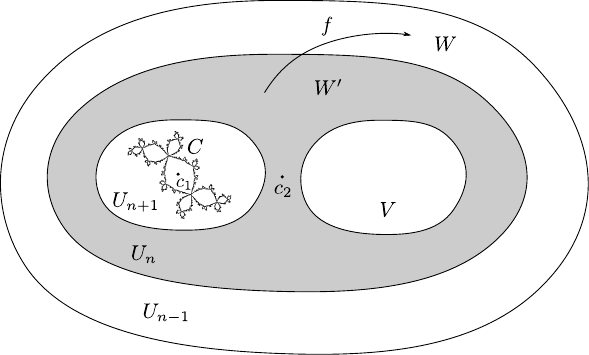} 
\caption{A capture of a critical point.}
\label{critical}
\end{figure}

\

Since the number of critical points of $f$ is finite, they cannot happen in $W'$ for $n$ large enough.
The claim follows.

\medskip


We conclude that  $F^{-k}(U_n)=U_{n+k}$ for all $k>1$, so
$\bigcap_{k=0}^\infty F^{-k}(U_n(C))=$

\noindent $\bigcap_{k=0}^\infty U_{n+k}(C)=C$.

\medskip

Notice finally that degree of $F$ is larger than 1, since $C$ hence $U_{n+1}$ is assumed to contain a critical point.


\end{proof}
{\bf Remarks} 1. Notice that the assumption that $C$ is critical need not be specified. It follows automatically from the assumption that $C$ is non-trivial. Indeed, if there were no critical point in $U_{n+1}$ then
the moduli of all annuli $U_n\setminus \overline{U_{n+1}}$ for $n$ large enough would be positive equal to each other so $C$ would be a point, contradicting the non-triviality of $C$.

2. Notice that for $F$ as in Lemma \ref{lem:poly_like},
$\deg(F)<\deg(f)$. Indeed, $C=K(F)$ is by definition completely invariant for $F$, namely $F^{-1}(C)=C$.
At the same time there exists a component
$C'$ of $f^{-1}(C)$, in $K(f)$ by its complete invariance for $f$, different from $C$,
since otherwise $C$ would be completely invariant also for $f$ so $K(f)=K(F)$.
Then $f(U_{n+1}(C'))=U_n(C)$ for all $n\in \mathbb N$, hence
$f$ maps $C'$ onto $C$ thus adding to $\deg(F)$ an additional contribution.\footnote{A more general approach to the proof, more standard, is via the \emph{topological exactness} of $f$ on $J(f)$, compare
a footnote to Comment 1 in Section \ref{final}, yielding the density of
$\bigcup_{n=0}^\infty f^{-n}(\{z\})$ in $J(f)$ for each $z\in J(f)$. This density will be used also in Proof of Proposition~\ref{prop:interval_components}.}

\bigskip

Below we recall the definition of  RB-domain, which was introduced in \cite{feliks3}, with the name introduced in \cite{puz} together with crucial applications of this notion.

\begin{definition*}[RB-domain]
Let $\Omega$ be a simply connected open domain in the Riemann sphere $\widehat{\mathbb C}$ with $\#(\widehat{\mathbb C}\setminus \Omega)>2$.
Assume there exists a holomorphic map defined on a neighborhood $U$ of  $\partial\Omega$ such that
\begin{equation}\label{eq:def_RB}
f(U\cap\Omega)\subset\Omega, \;\; f(\partial\Omega)=\partial\Omega
\quad\text{and}\quad \bigcap_{k=0}^\infty f^{-k}(U\cap \ov\Omega)=\partial \Omega.
\end{equation}

Then $\Omega$  is called a repelling boundary domain (RB-domain).
\end{definition*}

\begin{observation}\label{obs:1} The domain $\Omega:=\widehat{\mathbb C}\setminus C$   together with $f$ restricted to $U:=U_{n+1}(C)$ with
$n$ sufficiently large, as in Lemma~\ref{lem:poly_like},
is a \emph{repelling boundary domain (RB--domain)}. Indeed, by this Lemma, $f|_{U_n}^{-1}(C)=C$ which yields $f(U\cap\Omega)\subset\Omega$.
\end{observation}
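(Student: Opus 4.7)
The plan is to verify the three conditions in the RB--domain definition for the triple $(\Omega,U,f)$ with $\Omega=\widehat{\mathbb C}\setminus C$ and $U=U_{n+1}(C)$, together with the preliminary hypotheses that $\Omega$ be simply connected and that $\#(\widehat{\mathbb C}\setminus\Omega)>2$. The preliminary hypotheses are immediate: $C$ is a non-trivial continuum, so $\#C=\infty>2$, and since $C$ is a connected compact subset of $\widehat{\mathbb C}$, its complement $\Omega$ is an open connected set whose complement is connected, hence simply connected.

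For the first condition $f(U\cap\Omega)\subset\Omega$, I would argue as the paper already indicates: by Lemma~\ref{lem:poly_like} the restriction $F:=f|_{U_{n+1}(C)}$ is polynomial-like with $K(F)=C$, hence $F^{-1}(C)=C$ by complete invariance of filled-in Julia sets; if $z\in U_{n+1}(C)\setminus C$ had $f(z)\in C$, then $z\in F^{-1}(C)=C$, a contradiction. For the second condition $f(\partial\Omega)=\partial\Omega$, observe that $\partial\Omega=\partial C=J(F)$, and Julia sets of polynomial-like maps are totally invariant: $F^{-1}(J(F))=J(F)$, which combined with surjectivity of $F$ onto its codomain yields $F(J(F))=J(F)$. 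Since $f=F$ on $\partial C\subset U_{n+1}(C)$, this gives $f(\partial\Omega)=\partial\Omega$.

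For the third condition $\bigcap_{k=0}^\infty f^{-k}(U\cap\overline\Omega)=\partial\Omega$, the inclusion ``$\supset$'' is immediate from the second condition: any $z\in\partial\Omega\subset C\subset U$ has $f^k(z)\in\partial\Omega\subset U\cap\overline\Omega$ for every $k$. For ``$\subset$'', suppose every $f^k(z)$ lies in $U_{n+1}(C)\cap\overline\Omega$. Staying in $U_{n+1}(C)$ for all $k$ forces $z\in K(F)=C$, and then $f^k(z)\in C$ for all $k$ since $f(C)=C$; combined with $f^k(z)\in\overline\Omega$ one obtains $f^k(z)\in C\cap\overline\Omega$. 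Using the standard identity $\overline\Omega=\widehat{\mathbb C}\setminus\mathrm{int}(C)$ gives $C\cap\overline\Omega=\partial C=\partial\Omega$, so $z\in\partial\Omega$, as required.

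The only mildly subtle point is the identification $C\cap\overline\Omega=\partial C$, which matters precisely because $C$ may have non-empty interior (for example when it contains bounded Fatou components associated with an attracting cycle of $F$ inside $C$); this is a routine topological computation. Apart from this, everything reduces to the polynomial-like structure supplied by Lemma~\ref{lem:poly_like}, and I do not foresee any serious obstacle.
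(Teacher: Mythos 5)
Your verification is correct and, on the one condition the paper actually justifies ($f(U\cap\Omega)\subset\Omega$ via $F^{-1}(C)=C$ from Lemma~\ref{lem:poly_like}), it follows exactly the paper's line; the other two conditions of the RB-definition, which the paper leaves implicit, you check by routine arguments that all go through, including the correct handling of the case $\mathrm{int}(C)\neq\emptyset$ via $C\cap\overline\Omega=\partial C$. The only repair needed is your justification that $\Omega$ is connected: connectedness of the compact set $C$ alone does not give this (a circle is a counterexample); one should instead use that $C=\bigcap_n U_n(C)$ is a nested intersection of topological discs, so that $\Omega=\bigcup_n\bigl(\widehat{\mathbb C}\setminus U_n(C)\bigr)$ is an increasing union of connected sets containing $\infty$, hence connected, after which simple connectivity follows from connectedness of the complement $C$ as you say.
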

\medskip

Now we rely on:

\begin{thmB}[see \cite{feliks}, Theorem A']
 If $\Omega$ is an RB-domain then either $\HD_{\rm hyp}(\partial \Omega)>1$  or  $\partial\Omega$
 is an analytic Jordan curve or an analytically embedded interval, where $f$ is topologically conjugate to $z^d$
 or to a $\pm$ Chebyshev polynomial, respectively.
\end{thmB}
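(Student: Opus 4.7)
The plan is to analyse the harmonic measure on $\partial\Omega$ and to apply a Makarov-Zdunik style rigidity dichotomy. After a M\"obius change of coordinates I may assume $\infty\in\Omega$, so that $\Omega$ is a simply connected planar domain with compact boundary. First I fix a Riemann map $R:\mathbb D\to\Omega$. The conditions $f(U\cap\Omega)\subset\Omega$ and $\bigcap_k f^{-k}(U\cap\overline\Omega)=\partial\Omega$ let me define $g:=R^{-1}\circ f\circ R$ on a one-sided neighbourhood of $\partial\mathbb D$; a Schwarz reflection / Julia-Wolff-Carath\'eodory argument produces an inner function $g:\mathbb D\to\mathbb D$ whose radial boundary map $\overline g:\partial\mathbb D\to\partial\mathbb D$ is Lebesgue-preserving, finite-to-one, and intertwined with $f|_{\partial\Omega}$ by the radial extension of $R$. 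Consequently the push-forward of Lebesgue measure on $\partial\mathbb D$ is the harmonic measure $\omega$ on $\partial\Omega$ based at $R(0)$.

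Next, I build isolated, uniformly expanding, $f$-invariant Cantor sets $\Lambda_n\subset\partial\Omega$ with $\HD(\Lambda_n)\to\HD_{\rm hyp}(\partial\Omega)$. The condition $\bigcap_k f^{-k}(U\cap\overline\Omega)=\partial\Omega$ implies that any critical orbits of $f$ accumulating on $\partial\Omega$ are sparse; away from such orbits one realises $\Lambda_n$ as the limit set of an iterated function system of inverse branches of high iterates of $f$ defined on a small disc transverse to $\partial\Omega$, with expansion guaranteed by the Schwarz-Pick lemma applied to the hyperbolic metric of $\Omega$. The Bowen formula on each $\Lambda_n$ identifies $\HD(\Lambda_n)$ with the first zero of $t\mapsto P_{\Lambda_n}(-t\log|f'|)$, compatibly with Proposition~\ref{prop:bowen}.

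The decisive step, which I expect to be the main obstacle, is the rigidity dichotomy: if $\HD_{\rm hyp}(\partial\Omega)\le 1$, then $\partial\Omega$ is already smooth. By Makarov's theorem $\HD(\omega)\le 1$, and using the Manning-McCluskey identity $\HD(\mu)=h_\mu(f)/\chi_\mu(f)$ on each Cantor set $\Lambda_n$, together with the Ledrappier-type variational formula $\HD_{\rm hyp}(\partial\Omega)=\sup_\mu h_\mu/\chi_\mu$, the assumption $\HD_{\rm hyp}(\partial\Omega)\le 1$ forces the geometric equilibrium states on the repellers $\Lambda_n$ to match harmonic measure in the limit. This coincidence of a measure of maximal dimension with harmonic measure triggers a Liv\v{s}ic-type cohomological equation for $\log|f'|$ along periodic orbits on $\partial\Omega$, exactly as in the connected Julia set case treated in \cite{zdunik2}. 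Iterating this rigidity and exploiting the analyticity of $f$ on $U$, one concludes that $R$ extends real-analytically across $\partial\mathbb D$, so that $\partial\Omega$ is an analytic Jordan curve or arc and $f|_{\partial\Omega}$ is an expanding analytic self-cover. The classical Fatou-Ritt classification of such maps then identifies $f|_{\partial\Omega}$ up to analytic conjugacy with $z^d$ or a $\pm$ Chebyshev polynomial, completing the dichotomy.
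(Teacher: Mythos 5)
You should first be aware that the paper does not prove Theorem~B at all: it is quoted from \cite{feliks} (Theorem~A$'$ there), and the text only sketches the history of the argument --- the dichotomy according to whether the harmonic measure $\omega$ on $\partial\Omega$ is absolutely continuous or singular with respect to $\mathcal H^1$, the absolutely continuous case being settled in \cite{zdunik} and the singular case by adapting \cite{zdunik2}. Measured against that, your outline has the right architecture: the Riemann map $R$, the extension of $g=R^{-1}\circ f\circ R$ to an expanding holomorphic map of a neighbourhood of $\partial\mathbb D$ (this is the ``Resolving Singularities'' construction of \cite{feliks3} that the paper's Remark alludes to), harmonic measure as $R_*(\text{Leb})$, a rigidity dichotomy, and a final classification yielding $z^d$ or $\pm$~Chebyshev.

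The problem is that your decisive step is asserted rather than proved, and it is exactly where the theorem lives. The claim that $\HD_{\rm hyp}(\partial\Omega)\le 1$ ``forces the geometric equilibrium states on the repellers $\Lambda_n$ to match harmonic measure in the limit'' does not follow from Makarov plus Manning--McCluskey: $\omega$ is in general neither supported on nor approximable by measures carried by the uniformly hyperbolic sets $\Lambda_n$ (critical points of $f$ may lie on $\partial\Omega$ --- they do in the Chebyshev case --- so your ``sparse critical orbits'' remark is unjustified), and hence the variational description of $\HD_{\rm hyp}$ over the $\Lambda_n$ gives no grip on $\omega$. You also invoke only the upper half of Makarov's theorem, $\HD(\omega)\le 1$, whereas the engine of the proof is the lower half in its refined, law-of-the-iterated-logarithm form. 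The actual mechanism is: one shows $h_\omega=\chi_\omega$, so the pressure of $-\log|f'|$ for the boundary dynamics is $\ge 0$, and then the dichotomy runs on the asymptotic variance $\sigma^2$ of the cocycle comparing $\log|f'\circ R|$ with $\log|g'|$ (equivalently, the boundary behaviour of $\log|R'|$): if $\sigma^2>0$, the LIL for Gibbs measures of the expanding map $g$ produces hyperbolic subsets of $\partial\Omega$ of dimension $>1$; if $\sigma^2=0$, the resulting cohomological equation makes $\log|R'|$ bounded, $\partial\Omega$ rectifiable and $\omega\asymp\mathcal H^1$, after which \cite{zdunik} gives analyticity and the orbifold argument of \cite{zdunik2} (rather than ``Fatou--Ritt'') identifies $f$ as $z^d$ on a Jordan curve or $\pm$~Chebyshev on an arc. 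Without this (or an equivalent) mechanism the proposal stops precisely where the difficulty begins. A minor further slip: the boundary map of $g$ preserves the Lebesgue measure class, not Lebesgue measure itself, unless $g(0)=0$.
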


Theorem B was stated as a conjecture in \cite{puz}.
A comparison of harmonic measure $\omega$ on $\partial \Omega$ to $\mathcal H^1$ which is the Hausdorff measure  in dimension 1, is the key. For $\omega$ absolutely continuous it was proved in \cite{zdunik} that $\partial \Omega$ was analytic.
 It was also claimed in \cite{zdunik} that for $\omega$ singular $\HD (\partial \Omega)>1$ could be proved by adapting the methods of
\cite{zdunik2}.
A detailed proof of this appeared in \cite{feliks}. Analogously to \cite{zdunik2}, the proof not
only showed that $\HD(\partial\Omega)>1$, but, actually, the
hyperbolic dimension of  $\partial\Omega$ for $f$ restricted to it, was larger than $1$. For a survey see \cite{P-ICM}.

\

Combining the above Observation and  Theorem B, concludes the proof of Theorem~\ref{thm:main}, except it does not
exclude
$C$             
such that $\partial C=\partial\Omega$ is an analytic Jordan curve.

So, now we consider the latter case.
Denote $\partial C$ by $\gamma$.
Denote by $\Omega'$ the bounded (internal) component of $\mathbb C\setminus \gamma$, and consider now $\Omega'$  instead of the external one  $\widehat\C\setminus C$, which was considered above.
The domain $\Omega'$  is forward invariant for $f$ since $f(C)=C$ (as we replaced $f$ by its adequate iterate) and due to Maximum Principle.

We already know that $\Omega$ is an RB--domain. Let $U$ be a neighbourhood coming from the definition of RB-domain (see\eqref{eq:def_RB}). We  know (see Observation \ref{obs:1})
that any set $U_n$, with $n$ sufficiently large can be taken as $U$ in this definition.

Since $\gamma =\partial C$ is an analytic curve, there exists a neighbourhood $W$ of $\gamma$ , for which the Schwarz Reflection Principle applies.

Define $\widehat U_n:=U_n\cap\Omega$ and choose $n$, additionally, so large that $\widehat U_n\subset W$.
Take $\widehat U:=\widehat U_n$ for this $n$.
We have by \eqref{eq:def_RB}:
$$f(U\cap\Omega)\subset\Omega, \;\; f(\partial\Omega)=\partial\Omega
\quad\text{and}\quad \bigcap_{k=0}^\infty f^{-k}(U\cap \ov\Omega)=\partial \Omega=\gamma .$$

Denoting by $U'$ the reflection of $\widehat U$ with respect to $\gamma$, we see that the same holds true with $U$ replaced by the set $U^{\rm r}$, being the union of $U', \widehat U$ and the arc $\gamma$ along which their closures intersect,  and $\Omega$ replaced by $\Omega'$, i.e.
$$
f(U^{\rm r}\cap\Omega')\subset\Omega', \;\; f(\partial\Omega')=\partial\Omega'
\quad\text{and}\quad \bigcap_{k=0}^\infty f^{-k}(U^{\rm r}\cap \ov\Omega')=\partial \Omega'=\gamma .
$$

In other words,  $\Omega'$ is also a repelling boundary domain.  Since  $f(\widehat U)$ contains
the closure of $\widehat U$ in $\Omega$, by symmetry $f(U')$ contains the closure of $U'$ in  $\Omega'$.
In other words
the compact set
$\Omega'\setminus U'$ is mapped by holomorphic $f$ into its interior.
It follows that there is an attracting fixed point ( a sink)  in $\Omega'$, see e.g. \cite[Wolff-Denjoy Theorem]{cg},  and
$\Omega'$ is
the immediate basin of attraction to this sink\footnote{``immediate'' stands for a connected component of the basin, which contains the sink}. One can also refer to the observation that (due to RB) $f$ is a contraction on $\Omega'\setminus U'$ in the hyperbolic metric on $\Omega'$\footnote{One can also argue that, as $\Omega'$ is connected bounded and invariant, it is a component of the Fatou set for $f$. As being an RB-domain it cannot be neither parabolic, nor Siegel disc, nor Herman ring, so it must be an immediate basin of attraction to a sink. See e.g. \cite[Theorem IV.2.1]{cg}.}.

Using \cite[Lemma 9.1]{Brolin} we now conclude that $\gamma=\partial C$ is a circle and $f(z)=z^d$ in some complex affine coordinates.  (See also  \cite[Theorem A]{feliks} or    \cite[Theorem 8.1]{P-ICM}). But this contradicts the assumption that $J(f)$ is disconnected.\footnote{We are grateful to  Fei Yang for bringing our attention to this absence of analytic Jordan curves.}

\medskip

{\bf Remark.} A key feature on the domain $\Omega'$ is that it is $f$-invariant, unlike
$\Omega$ containing components of $f^{-1}(C)$. So having the nonempty interior of $C$ we can apply
\cite[Lemma 9.1]{Brolin}, what does not work for $\Omega$. The RB-property is weaker than
being an immediate basin of attraction to a sink.

The common feature is the repulsion
of the boundary to the side of a domain $\Omega$, which for a pullback $g=R^{-1}\circ f\circ R$ of $f$
 for a Riemann mapping $R:{\mathbb D}(0,1) \to \Omega$ allows to extend $g$ holomorphically beyond $\partial{\mathbb D}(0,1)$, expanding,  see \cite[Section ``Resolving Singularities'']{feliks3}, thus allowing easily to invoke Gibbs measures, see e.g. \cite{P-ICM}.

\section{Proof of Theorem~\ref{thm:HDabove}}\label{above}

We shall use a version of Bowen's formula, which can be found in \cite{feliks_conical},
see also \cite[Section 12.5]{pubook},  for a strengthened version.
\begin{prop}\label{prop:bowen}
Let $f$ be a rational map of degree $d\ge 2$. There exists an exceptional set $E\subset \widehat{\mathbb C}$ of Hausdorff dimension $0$, such that for every $t\ge 0$ (even every real $t$)
and for every $z\notin E$ the limit
$$
P(t,f;z)=\lim_{n\to\infty}\frac{1}{n}\log\sum_{v\in f^{-n}(z)}\frac{1}{|(f^n)'(v)|^t}
$$
exists, and is independent of $z\in\widehat{\mathbb C}\setminus E$. Denote the common value as $P(t,f)$. It is called the geometric pressure.  The function $t\mapsto P(t,f)$ is continuous and non-increasing. Moreover $P(0,f)$ is positive
(equal to $\log d>0$, the topological entropy).
The following formula holds:

$$\HD_{\rm hyp}(J(f))=\inf\{t>0: P(t,f)\le 0\}.$$
\end{prop}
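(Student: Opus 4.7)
The plan is to follow the standard derivation of the geometric pressure for a rational map, handling the proposition's three main assertions in order: existence of the limit $P(t, f; z)$ and its independence of $z$ off a zero-dimensional exceptional set; continuity, monotonicity and the value at $t=0$; and finally Bowen's formula identifying the hyperbolic dimension with $\inf\{t>0 : P(t, f) \le 0\}$.

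For the existence and independence I would take $E$ to be the countable union of the forward orbits of the critical points of $f$ together with the (at most two) Julia-exceptional points of finite grand orbit, so that $\HD(E) = 0$. For $z \notin E$ one may find a disk $B(z, r)$ disjoint from the post-critical set, on which all $d^n$ inverse branches of $f^n$ are univalent; Koebe's distortion theorem then yields a uniform comparison $S_n(t, z) \asymp S_n(t, z')$ for $z, z'$ in a common small disk off $E$, with a constant depending only on $t$ and $r$, where $S_n(t, z) := \sum_{v \in f^{-n}(z)} |(f^n)'(v)|^{-t}$. Splitting $(n+m)$-fold preimages through intermediate $n$-fold preimages and transporting the inner sum by this comparison produces the almost-subadditive inequality $\log S_{n+m}(t, z) \le \log S_n(t, z) + \log S_m(t, z) + O(1)$, so Fekete's lemma delivers the limit $P(t, f; z)$; propagating the local comparison through chains of overlapping disks in the connected set $\widehat{\mathbb C} \setminus E$ yields independence of $z$.

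The finite-$n$ approximants $\frac{1}{n} \log S_n(t, z)$ are log-sum-exps in $t$ and hence convex, so the pointwise limit $P(t, f)$ is convex on $\mathbb R$ and therefore continuous. For monotonicity I would invoke the variational identification $P(t, f) = \sup_\mu \bigl( h_\mu(f) - t \chi_\mu(f) \bigr)$ over $f$-invariant probabilities supported in $J(f)$, combined with Przytycki's theorem that $\chi_\mu(f) \ge 0$ for every such $\mu$; this makes each summand, and hence the supremum, non-increasing in $t$. At $t = 0$ one has $S_n(0, z) = d^n$ for $z \notin E$, so $P(0, f) = \log d$, agreeing with the topological entropy of $f$ on $J(f)$ by Lyubich--Gromov.

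Bowen's formula remains. One direction, $\HD_\hyp(J(f)) \le \inf\{t : P(t, f) \le 0\}$, is a clean covering estimate: for any isolated expanding $f$-invariant set $L \subset J(f)$ and $z \in L$, the preimages $f^{-n}(z) \cap L$ generate a cover of $L$ whose total $t$-weight is controlled by $S_n(t, z)$, so $P(t, f) < 0$ forces the $t$-Hausdorff measure of $L$ to vanish. The reverse inequality is the main obstacle: one must produce, for each $t < \HD_\hyp(J(f))$, an isolated expanding $L$ with $P(t, f|_L) > 0$. The strategy, following Przytycki, is to construct an exhausting sequence of isolated expanding sets $L_k \subset J(f)$ by removing shrinking neighbourhoods of the critical set and passing to maximal invariant remainders, and to show that $P(\cdot, f|_{L_k}) \nearrow P(\cdot, f)$ pointwise. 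Classical uniformly expanding Bowen theory then identifies $\HD(L_k)$ with the zero of $P(\cdot, f|_{L_k})$, and passing to the limit yields the asserted equality. The principal delicacy is precisely this upward convergence of finite-approximation pressures, i.e.\ ruling out that preimages drifting indefinitely close to the critical set carry a persistent share of the preimage sum that is invisible to any $L_k$.
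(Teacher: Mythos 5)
The paper offers no proof of Proposition~\ref{prop:bowen}: it is quoted as a known theorem with references to \cite{feliks_conical} and \cite[Section 12.5]{pubook}, so the only question is whether your sketch would stand on its own. It would not, because it breaks down at precisely the points that make the cited results nontrivial. The central gap is in your existence/independence argument. The identity $S_{n+m}(t,z)=\sum_{w\in f^{-n}(z)}|(f^n)'(w)|^{-t}\,S_m(t,w)$ shows that almost-subadditivity requires a uniform comparison of $S_m(t,w)$ with $S_m(t,z)$ for \emph{every} intermediate preimage $w\in f^{-n}(z)$; these $w$ do not lie in the small disk $B(z,r)$ where your Koebe comparison is valid, but spread over a full neighbourhood of $J(f)$ and come arbitrarily close to critical points and critical values of iterates, where $S_m(t,\cdot)$ is not comparable to $S_m(t,z)$. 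So Fekete's lemma cannot be invoked as described. Relatedly, taking $E$ to be the (countable) postcritical set does not yield a disk around $z$ disjoint from it when $z\in J(f)$ and the postcritical set is dense in $J(f)$ --- a common situation for the maps of interest here. In the actual theorem the exceptional set is defined via a ``safe point'' condition (univalent inverse branches on subexponentially shrinking disks along the backward tree), and the fact that the complement of the safe set has Hausdorff dimension $0$ is a separate Borel--Cantelli/Frostman-type argument, not a consequence of countability.

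The second gap is the lower bound in Bowen's formula. You correctly reduce it to producing isolated expanding sets $L_k$ with $P(\cdot,f|_{L_k})\nearrow P(\cdot,f)$, but you then label exactly this convergence ``the principal delicacy'' and do not prove it; that convergence (the equality of hyperbolic pressure and tree pressure) \emph{is} the theorem, and in \cite{feliks_conical} and \cite[Chapter 12]{pubook} it is obtained by building Cantor repellers out of large families of inverse branches that avoid the critical set while capturing almost all of the preimage sum. The remaining points of your sketch (convexity in $t$ hence continuity, $P(0,f)=\log d$, monotonicity from the variational expression together with nonnegativity of Lyapunov exponents) are correct, though the variational characterization of the geometric pressure is itself one of the deep equivalences you would be importing rather than proving.
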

\noindent In words, $\HD_{\rm hyp}(J(f))$ is the first zero of $P(t,f)$.

There is an abundance of points not belonging to $E$ (i.e. non-exceptional). Those are for example all points which are not post-critical (not in the forward $f$-trajectory of an $f$-critical point)
and which belong to  basins of attraction to periodic orbits.

\bigskip

In view of Theorem~\ref{thm:main}, to prove Theorem~\ref{thm:HDabove} it remains only to prove the following 

\begin{proposition}\label{prop:dim_large}
Let $f:\C\to \C$ be a polynomial of degree $d\ge 2$ with disconnected Julia set. Suppose there exists a periodic
connected component $C$ of $K(f)$
being
an analytic arc.
Then $\HD_{\hyp} (J(f))>1$.
\end{proposition}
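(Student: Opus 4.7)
The plan is to construct a finite conformal iterated function system $\mathcal G$ on a small disc $\Delta\subset\mathbb C$ whose limit set $\Lambda$ is a hyperbolic isolated $f$-invariant subset of $J(f)$ with $\HD(\Lambda)>1$; by Proposition~\ref{prop:bowen} this suffices to conclude $\HD_{\hyp}(J(f))>1$. After replacing $f$ by an iterate, I may assume $f(C)=C$; by Theorem~\ref{thm:main}, on $C$ the map is analytically conjugate to $\pm$ Chebyshev of degree $D$ for some $D\ge 2$. Lemma~\ref{lem:poly_like} provides, for $n$ large, a polynomial-like map $F=f|_{U_{n+1}(C)}:U_{n+1}(C)\to U_n(C)$ of degree $D$ with $K(F)=C$, and by Remark~2 following that lemma the disconnectedness of $J(f)$ forces $d=\deg f>D$. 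Consequently $f^{-1}(U_n(C))$ contains a connected component $V$ disjoint from $U_{n+1}(C)$ on which $f$ is proper of positive degree $d_V\ge 1$, and $V\cap J(f)\ne\emptyset$ by total invariance of $J(f)$.

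Next I choose a non-exceptional $z_0\in C$ away from the Chebyshev critical orbit and a small closed disc $\Delta$ centered at $z_0$. The Chebyshev dynamics of $f|_C$ is realised by a conformal first-return IFS $\mathcal G_{\mathrm{Ch}}$ of inverse branches of iterates of $f$ along $C$ mapping $\Delta$ back into itself; since the Chebyshev Julia set has $\HD_{\hyp}=1$, Bowen's formula gives that the pressure $P_{\mathcal G_{\mathrm{Ch}}}$ vanishes at $s=1$. Using the density of $\bigcup_m f^{-m}(z_0)$ in $J(f)$ (topological exactness) together with $V\cap J(f)\ne\emptyset$, there exist $m\ge 1$ and an inverse branch $\psi=f^{-m}$ whose intermediate itinerary passes through $V$ and whose image $\psi(\Delta)\subset\Delta$ is disjoint from the images of all branches of $\mathcal G_{\mathrm{Ch}}$; $\psi$ is then a conformal contraction on $\Delta$. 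Setting $\mathcal G=\mathcal G_{\mathrm{Ch}}\cup\{\psi\}$, the augmented IFS satisfies the open set condition and its limit set $\Lambda$ is a hyperbolic isolated $f$-invariant subset of $J(f)$ (isolatedness follows from taking as trapping neighbourhood the union of the images of the IFS branches intersected with $J(f)$).

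By Bowen's formula for conformal IFSs, $\HD(\Lambda)$ is the unique $s$ with $P_{\mathcal G}(s)=0$. At $s=1$ one has $P_{\mathcal G_{\mathrm{Ch}}}(1)=0$, while the strictly positive contribution of $\psi$ to the associated transfer operator strictly increases the leading eigenvalue: a standard Perron--Frobenius argument applied to the strictly positive Chebyshev eigenfunction $h_{\mathrm{Ch}}$ yields $L_{\mathcal G}h_{\mathrm{Ch}}\ge(1+\varepsilon)h_{\mathrm{Ch}}$ for some $\varepsilon>0$, so $P_{\mathcal G}(1)>0$ and hence $\HD(\Lambda)>1$, which concludes the proof. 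The main obstacle is the Chebyshev critical point on $C$, where the inverse branches are not uniformly conformal contractions; following the Inoquio-Renteria and Rivera-Letelier strategy, this is handled by choosing $\Delta$ and $z_0$ well inside $C$ away from the critical orbit (so that Koebe distortion applies uniformly on the relevant inverse branches) and by iterating the Chebyshev branches sufficiently many times so that all their images together with $\psi(\Delta)$ lie well inside $\Delta$ with pairwise disjoint closures required by the open set condition.
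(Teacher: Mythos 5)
Your overall architecture --- Chebyshev inverse branches contributing pressure $0$ at $t=1$, plus one extra branch routed through a component $V$ of a preimage of $U_m$ away from $C$ that pushes the pressure strictly above $0$ --- is exactly the strategy of the paper (your Perron--Frobenius perturbation plays the role of the paper's count $\Lambda_N(x)\ge ab(1+ab)^{N-1}$). But there is a genuine gap at the pivotal step $P_{\mathcal G_{\mathrm{Ch}}}(1)=0$. If, as you state, $\mathcal G_{\mathrm{Ch}}$ is a \emph{finite} IFS of return branches, then its limit set is a proper compact hyperbolic subset of the arc $C$, and its pressure at $s=1$ is \emph{strictly negative}: the unique equilibrium state of $-\log|F'|$ on $C$ at $t=1$ is the absolutely continuous invariant measure, whose support is all of $C$, so any measure carried by a proper compact invariant subset has $h(\nu)-\lambda(\nu)<0$. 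The hyperbolic dimension of the Chebyshev arc equals $1$ only as a supremum over such subsystems, never attained by any one of them, so adding the single branch $\psi$ need not bring the pressure of the augmented finite system above $0$. To get pressure exactly $0$ at $s=1$ you must keep \emph{all} branches of $F^{-n}$, and then the vanishing of the pressure is no longer a formal consequence of ``Bowen's formula for Chebyshev'': the sum $\sum_{F^n(y)=x}|(F^n)'(y)|^{-1}$ includes branches whose itineraries pass arbitrarily close to the critical point of $F|_C$ and to the endpoints of the arc, where derivatives degenerate, and one must prove this sum is bounded below \emph{uniformly in $n$}. Choosing $\Delta$ away from the critical orbit controls distortion on $\Delta$ but does nothing about those degenerating branches. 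That uniform lower bound is exactly Lemma~\ref{lem:key} of the paper, proved by lifting $F$ through the Zhukovsky map $\Pi(z)=\tfrac12(z+\tfrac1z)$ to an expanding map $G$ of an analytic Jordan curve (this is where the hypothesis that $C$ is an \emph{analytic} arc enters) and transferring the estimate back via the weight $|\Pi'|$, whose two zeros are absorbed after two applications of the transfer operator. Your sketch contains no substitute for this argument.

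Two further points. With infinitely many branches the limit set is no longer a compact isolated hyperbolic set, so your final appeal to ``$\HD(\Lambda)>1$'' needs either an exhaustion by finite subsystems after positivity of the pressure is established, or, as the paper does, no IFS formalism at all: Proposition~\ref{prop:est_pressure} merely observes that the selected compositions form a subfamily of all branches of $f^{-N}$, so their sum bounds the full pressure sum from below, whence $P(1,f)>0$ and $\HD_{\hyp}(J(f))>1$ directly from Proposition~\ref{prop:bowen}. Relatedly, the disjointness of $\psi(\Delta)$ from the images of all (infinitely many) Chebyshev branches, which your open set condition requires, is asserted but not verified; the paper replaces it by the weaker and easily checked statement that the multivalued compositions $\phi_{\underline\ell}$ are mutually distinct branches of $f^{-N}$, so no preimage is counted twice.
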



\

\begin{proof}[Proof of Proposition~\ref{prop:dim_large}]
We shall prove that
 $P(1,f)>0$, which implies, by Proposition \ref{prop:bowen}, that
 $\HD_{\hyp} (J(f))>1$.
Consider
 $F=f:U_{m+1}(C)\to U_m(C)$, with some fixed $m$  large enough, so that $F$ is a polynomial-like map and $C$ its
 Julia set, here equal to the arc $K(F)$,
 see Lemma \ref{lem:poly_like} (there $m$ was denoted $n$).

\smallskip

For $x\in U_m\setminus C $ denote
\begin{equation}\label{eq:L_n}
L_n(F,x):=
\log \sum_{F^n(y)=x} \exp(-\log |(F^n)'(y)| )=
\log \sum_{F^n(y)=x} |(F^n)'(y)|^{-1}.
\end{equation}

\smallskip
To continue the proof of Proposition~\ref{prop:dim_large} we need two lemmas (Lemma~\ref{lem:key} and Lemma~\ref{lem:other_branch}) , which we formulate and prove below. 

\begin{lemma}\label{lem:key}
There exists $C_0 > 0$ such that for all $x\in U_m\setminus C$ and $n>0$\,
\begin{equation}\label{key}
L_n(F,x) \ge -C_0.
\end{equation}
\end{lemma}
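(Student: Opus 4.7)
The plan is to reduce the problem to an explicit computation for the Chebyshev polynomial $T_d$, exploiting the analytic conjugacy between $F|_C$ and $\pm T_d$ on $[-1,1]$ given by Theorem~\ref{thm:main}. Since $C$ is an analytic arc and the conjugacy $h_0\colon[-1,1]\to C$ is real-analytic with non-vanishing derivative, it extends by analytic continuation to a conformal bijection $h$ between complex neighborhoods of $[-1,1]$ and $C$, with $F\circ h = h\circ T_d$ (the sign of $T_d$ is irrelevant for moduli). Choosing $m$ so large that $\overline{U_m}$ lies in the range of $h$ (possible since $U_m$ shrinks to $C$), the estimate for $F^n$ transports to the corresponding one for $T_d^n$, with only a multiplicative distortion by bounded factors $|h'|^{\pm 1}$ on the compact set $\overline{U_m}$. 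It therefore suffices to prove
\[
\sum_{T_d^n(y')=x'}|(T_d^n)'(y')|^{-1}\;\ge\;c\qquad\text{uniformly in } x'\in W_m\setminus[-1,1]\text{ and } n\ge 1,
\]
where $W_m := h^{-1}(U_m)$.

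For this Chebyshev sum, I will use the Joukowski parametrization $\phi(w)=(w+w^{-1})/2$, a conformal isomorphism of $\{|w|>1\}$ onto $\widehat{\mathbb C}\setminus[-1,1]$ that conjugates $w\mapsto w^d$ to $T_d$. Writing $x'=\phi(w_0)$ with $|w_0|>1$, the preimages of $x'$ under $T_d^n$ are $\phi(w_k)$ with $w_k=w_0^{1/d^n}\zeta_{d^n}^k$ for $k=0,\ldots,d^n-1$ and $\zeta_{d^n}=e^{2\pi i/d^n}$. A short differentiation of the semiconjugacy using $\phi'(w)=(w^2-1)/(2w^2)$ gives the closed form
\[
\sum_{k=0}^{d^n-1}\frac{1}{|(T_d^n)'(\phi(w_k))|}\;=\;\frac{|w_0|^{(d^n-1)/d^n}}{d^n\,|w_0^2-1|}\sum_{k=0}^{d^n-1}|w_k^2-1|.
\]

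The central lower bound for $\sum_k|w_k^2-1|$ comes from the AM--GM inequality. Since the $w_k$ are the roots of $w^{d^n}-w_0$, specializing that polynomial at $w=\pm 1$ gives $\prod_k|w_k-1|=|1-w_0|$ and $\prod_k|w_k+1|=|(-1)^{d^n}-w_0|$, hence $\prod_k|w_k^2-1|=|1-w_0|^2$ for $d^n$ even and $|1-w_0^2|$ for $d^n$ odd. AM--GM then yields $\sum_k|w_k^2-1|\ge d^n(\prod_k|w_k^2-1|)^{1/d^n}$; substituting back and simplifying produces, in the even case, the clean bound
\[
\sum_{k=0}^{d^n-1}\frac{1}{|(T_d^n)'(\phi(w_k))|}\;\ge\;\frac{|w_0|^{(d^n-1)/d^n}}{|w_0-1|^{1-2/d^n}|w_0+1|},
\]
and the analogue $|w_0|^{(d^n-1)/d^n}/|w_0^2-1|^{1-1/d^n}$ in the odd case.

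The step requiring the most attention is showing that these right-hand sides stay bounded below uniformly in $n$ and in $w_0\in\{1<|w|\le R_0\}$, where $R_0$ is any constant with $W_m\subset\phi(\{1<|w|\le R_0\})$. What could go wrong are the regimes $|w_0|\to 1$ (i.e.\ $x$ approaches $C$) and $w_0\to\pm 1$ (i.e.\ $x$ approaches the endpoints of $C$), where the factor $|w_0^2-1|^{-1}$ in the derivative formula is singular. The AM--GM bound is calibrated so that the small factor $|1-w_0|^{2/d^n}$ produced by the product exactly absorbs this singularity, and the remaining exponents $1-2/d^n,\ 1-1/d^n\in[0,1)$ keep the denominators bounded above by a constant depending only on $R_0$, while the numerator $|w_0|^{(d^n-1)/d^n}\ge 1$. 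This gives a lower bound independent of $n$, and combining with the bounded conjugacy distortion from step~1 produces the constant $C_0$.
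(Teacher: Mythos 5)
The reduction in your first step is where the proof breaks. You assume that the conjugacy $h_0\colon[-1,1]\to C$ between $\pm T_d$ and $F|_C$ extends to a conformal bijection $h$ of full complex neighbourhoods with $F\circ h=h\circ T_d$. Such an extension would make $F$ \emph{holomorphically} conjugate to $\pm T_d$ near $C$, and in particular the multiplier of every periodic point of $F$ on $C$ (endpoints included) would have to equal that of the corresponding periodic point of the Chebyshev model. This rigidity is false, and the paper's own example in Section~\ref{sec:example} refutes it: for $f_{\varepsilon,\beta(\varepsilon)}$ with small $\varepsilon>0$ the invariant arc is $[-\beta,p_{\varepsilon,\beta}]$ and the quadratic-like restriction is topologically conjugate to $z^2-2$, yet $f'_{\varepsilon,\beta}(p_{\varepsilon,\beta})=3\varepsilon p^2+2p=4+8\varepsilon+O(\varepsilon^2)\ne 4$, so there is no local holomorphic conjugacy even at the endpoint fixed point (the same obstruction occurs at the interior fixed point, whose multiplier is $-2+\varepsilon+O(\varepsilon^2)$). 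The conjugacy supplied by Theorem~\ref{thm:main} lives on the arc itself; ``analytic'' refers to the arc (its Zhukovsky preimage is an analytic Jordan curve), not to a conformal conjugacy of neighbourhoods, and off the arc one only has quasiconformal (hybrid) equivalence, which does not transport the sums $\sum|(F^n)'(y)|^{-1}$ with bounded multiplicative error. Since the preimages $y\in F^{-n}(x)$ accumulate on all of $C$, including the endpoints where your singular factors live, the estimate cannot be imported from $T_d$ this way.

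Your explicit Chebyshev computation (Joukowski parametrization, the product $\prod_k|w_k^2-1|$ read off from $w^{d^n}-w_0$ at $w=\pm1$, and AM--GM) is correct as a statement about $T_d$, and it is close in spirit to the paper's argument --- but the paper applies the Zhukovsky map $\Pi(z)=\frac12(z+\frac1z)$ to $F$ itself rather than to a model. There $\gamma=\Pi^{-1}(C)$ is an analytic Jordan curve, $F$ lifts to a map $G$ that extends holomorphically across $\gamma$ by Schwarz reflection and is expanding on it; the chain rule gives $\sum_y|(F^n)'(y)|^{-1}=|\Pi'(w)|^{-1}\sum_v|\Pi'(v)|\,|(G^n)'(v)|^{-1}$, the unweighted sum $\sum_v|(G^n)'(v)|^{-1}$ is bounded below via Koebe distortion because the arcs $G_v^{-n}(\gamma)$ tile $\gamma$ and their lengths add up to the length of $\gamma$, and the vanishing of the weight $|\Pi'|$ at the two lifts of the endpoints is absorbed by one extra application of the transfer operator. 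To salvage your write-up you would need to run the Joukowski/AM--GM scheme for this lifted map $G$ (for which no closed form of the preimages is available, so a distortion argument replaces the algebra), not for $T_d$.
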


\begin{proof}
Let $\Omega= \widehat\C\setminus C$.
We know from the proof of Theorem~\ref{thm:main} that $\Omega$ is an RB-domain.
Recall that $C=\partial \Omega$ is an analytically embedded interval,
as assumed in Proposition \ref{prop:dim_large}.
We recall the final step of the proof of the analyticity of $C$ provided in \cite[Proposition 6]{zdunik}), which will allow us to deduce also the inequality \eqref{key}.

So assume
that $C$ is an analytically embedded interval, with endpoints, say $-1, 1$.
Now we use Zhukovsky's function, namely the
ramified (branched) covering map $\Pi:\widehat{\mathbb{C}} \to \widehat{\mathbb{C}}$, ramified over $-1$ and $1$:  $\Pi(z)=\frac{1}{2}(z+\frac{1}{z})$
and the preimage $\gamma=\Pi^{-1}(C)$, see Figure \ref{Zhukovsky}. Then $\gamma$ is an analytic Jordan curve, as proved
in \cite[Proposition 6]{zdunik}.\!\footnote{The property of a closed continuous  arc joining $-1$ and $1$, saying that the Jordan curve being its preimage under $\Pi$ is analytic, can be assumed to be a definition of the analyticity of the arc
(called therefore an analytic arc or an analytically embedded interval).}


\bigskip

The curve $\gamma$ divides the
sphere into two disc $\mathcal D_i, i=1,2$ and $\Pi(\mathcal D_1)=\Pi(\mathcal D_2)=\widehat{\mathbb
C}\setminus C$. The map $F:U_{m+1}\setminus C \to U_m\setminus C$ can be lifted to holomorphic maps $G_i$ defined on the respected components $W_i:=\mathcal D_i\cap \Pi^{-1}(U_{m+1}\setminus C))$,
so that
$G_i(W_i)\subset\mathcal D_i$. Next consider just one of these components,
say $W_1$\footnote{\emph{A priori} we do not know that both $G_1$ and $G_2$ are mutual holomorphic extensions through $\gamma$, but we need only one of them.}.

\

\begin{figure}
\includegraphics[height=5cm]{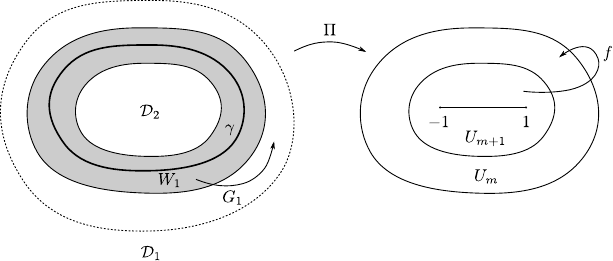} 
\caption{A lift by Zhukovsky function.}
\label{Zhukovsky}
\end{figure}

\


By Carath\'eodory's Theorem (local version), see e.g. \cite[Ch.II.3, Theorem 4']{Goluzin}, and Schwarz reflection principle, we can extend $G_1$ holomorphically from $\mathcal D_1$ to a map $\widehat G$ acting on a neighbourhood of $\gamma$. Then the map $\widehat{G}$ is expanding on $\gamma$
because of uniform convergence  of $\widehat G^{-n}$ on a neighbourhood of $\gamma$ to $\gamma$ which is a nowhere dense set. This implies  that the limit functions are constant, hence uniform
 convergence of derivatives $(\widehat G^{-n})'$ to 0, by normality, see e.g. \cite[Section 7]{feliks3} or \cite[Proof of Theorem  6.5]{Brolin}.
 Due to the just mentioned expanding property, the domain $\mathcal D_1$ bounded by $\gamma$ is an
RB-domain for the action of $\widehat G$. (It is seen also directly, by
  taking as a neighbourhood of $\gamma=\partial\mathcal D$ in the definition of RB, the set $\widehat U=\Pi^{-1}(U_m)$.


\bigskip

{\bf  Remark.}
Here is a different argument for the existence of the extension of $G$ from $W_1$ beyond $\gamma$ not using Schwarz reflection principle: Consider the lift of $F$ on say $U_{m+1}$ to $G$ on the set $W:=\Pi^{-1}(U_{m+1})$, which is a neighbourhood of $\gamma$ (the both sides and $\gamma$ itself), for the branched covering $\Pi$, directly by the formula
\begin{equation}\label{lift}
G:= \Pi^{-1} \circ F \circ \Pi.
\end{equation}
Because of $\Pi^{-1}$ it is not \emph{a priori} clear that $G$ in this formula is well defined.
 Here is an explanation. First define $G:W_1\to \mathcal D_1$ as $G_1$ above. Fix an arbitrary $z\in W_1$ and denote $w:=G(z)$. Next extend  $G$ using \eqref{lift} along curves in $W$ starting from $z$ mapped to $w$, using \eqref{lift}, the curves omitting
$$
A:=\Pi^{-1}(\Crit (F) \cup \{-1,1\}),
$$
where $\Crit(F)$ is the set of $F$-critical points.
Notice that $A$ is precisely the set mapped by $F \circ \Pi$ to $1$ or $-1$, the critical values of $\Pi$, namely where
$\Pi^{-1}$ has singularities. This is so because $F$ is topologically conjugate to $\pm$ Chebyshev polynomial. So all the singularities of $G$ defined by \eqref{lift} are holomorphically removable.
\noindent Notice finally that $W$  is a topological annulus, with the fundamental group generated by the homotopy class of any curve $\gamma '$ in $W_1$ starting and ending at $z$ running once along $W_1$.
 The growth of the prolongation $G$ along $\gamma'$ is 0, because it starts and ends at $w$.
 It is so because $\gamma'\subset W_1$, where $G$ has been well-defined. 
\noindent So $G$ being an analytic continuation along curves is a well-defined (single-valued) holomorphic function on $W$.
In fact this $G$ coincides with the extension $\widehat G$ found before, since these maps are holomorphic and coincide in $\mathcal D_1$.

\bigskip

Let us return to the proof of Lemma \ref{lem:key}.
Let $x\in U_m\setminus C$. Our aim is  to find a lower bound for $L_n(F,x)$.
Denote by $w$ the unique preimage of $x$ under $\Pi$ in $\mathcal D_1$.
The map $\Pi$ gives a bijection between the sets $\{v\in G^{-n}(w)\}$ and $\{y\in F^{-n}(x)\}$.
Putting $y=\Pi(v)$ we obtain:
$$
(F^n)'(y)=(G^n)'(v)\cdot\frac{\Pi'(w)}{\Pi'(v)}
$$
and, consequently,
\begin{equation}\label{transfer2}
\sum_{y\in F^{-n}(x)}\frac{1}{|(F^n)'(y)}=\frac{1}{|\Pi'(w)|}\cdot
\sum_{v\in G^{-n}(w)}\frac{1}{|(G^n)'(v)|}\cdot |\Pi'(v)|.
\end{equation}

\bigskip

Let us  estimate  the expression above, but without  $\Pi'$,
that is let us estimate $L_n(G, w)$ (defined for $G$ as we did for F in
\eqref{eq:L_n}).
First assume $w\in \gamma$.
Denote by ${\tt Le}$ the Euclidean length of curves contained in our analytic curve $\gamma$.
There exists a constant $c>0$ such that for each $n$ and every $w\in \gamma$,
choosing an arbitrary $w'\in \gamma\setminus\{w\}$, denoting by $\gamma(w)$
the open arc  $\gamma\setminus \{w'\}$ we have
\begin{equation}\label{transfer1}
\sum_{G^n(v)=w} |(G^n)'(v)|^{-1}\ge c \cdot \sum_{G^n(v)=w} {\tt Le}(G_v^{-n}(\gamma(w)))/{\tt Le}(\gamma(w))\ge
\end{equation}
$$
c \cdot {\tt Le} (\gamma)/{\tt Le}(\gamma)=c>0.
$$
This is due to bounded distortion for iterates (or by Koebe distortion lemma) by a constant $c$,
see \cite[Section 6.2]{pubook}. The subscript $v$ at $G^{-n}$ means the component containing $v$.
Due to bounded distortion $w\in \gamma$ can be replaced by an arbitrary point $w$ in a neighbourhood of $\gamma$ and the same estimate as in \eqref{transfer1} is achieved for for such points.

\smallskip

We could define   so-called transfer operator (Perron-Frobenius-Ruelle) for potential $\psi:=-\log |G'|$
acting on a continuous function $\varphi$
$$
\mathcal L_\psi (\varphi)(w):=\sum_{G(v)=w} (\exp \psi(w)))\varphi(w).
$$
Then the expression estimated in \eqref{transfer1} could be written as $\mathcal L^n_\psi(\1)$.
Compare \eqref{eq:L_n}.

To continue \eqref{transfer2} we need to estimate from below the value
$\mathcal L_\psi^n(\varphi)(w)$ for potential $-\log |G'|$, where $\varphi=|\Pi'|$.
The function $\varphi$ has value zero at two points, but $\mathcal L_\psi^2(\varphi)$ is  positive, thus bounded from below  by some constant $c_1>0$.
So,
$$
\mathcal L_\psi^n(\varphi)(w)=\mathcal L_\psi^{n-2}(\mathcal L_\psi^2(\varphi))(w)\ge \mathcal L_\psi^{n-2}(c_1\cdot \1)(w)=c_1\cdot \mathcal L_\psi^{n-2}(\1)(w),
$$
and the last term is bounded below by $c>0$ as in \eqref{transfer1}.

Since $\frac{1}{\Pi'(w)}$ is bounded away from 0 in $\Pi^{-1}( U_m)$, since $\Pi'$ is upper bounded, we are done.

\end{proof}

For all $n=0,1,...$ write $F_n:=f|_{U_{n+1}(C)}$ the polynomial-like mappings considered in
Section \ref{proof_thm1}.
Consider now an arbitrary $m\in\mathbb N$ such that
$C$ is the filled-in Julia set for $F_m$, existing by Lemma~\ref{lem:poly_like}.

\begin{lemma}\label{lem:other_branch}
There exists an integer $N_1>0$ and a
connected component $V$ of $f^{-N_1}(U_m)$, such that $\overline V\subset U_m\setminus U_{m+1}$.
\end{lemma}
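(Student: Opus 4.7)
The plan is to produce the component $V$ by first showing that the annular region $A_m := U_m \setminus \overline{U_{m+1}}$ contains Julia points, then invoking density of backward orbits to find a pullback of $U_m$ whose component lands inside $A_m$; finally the boundary argument from the proof of Lemma~\ref{lem:poly_like} will confine the closure of this component to $A_m$.

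The main obstacle is the first step: showing $A_m \cap J(f) \neq \emptyset$, since \emph{a priori} one might fear that $K(f) \cap U_m = C$, in which case $A_m \cap J(f)$ would be empty. This is ruled out by disconnectedness of $J(f)$ combined with topological exactness of $f$ on $J(f)$. Concretely, pick $z^* \in J(f) \setminus C$, which exists since $C$ is a component of $J(f)$ and $J(f)$ is disconnected. Since $f$ is not complex affinely conjugate to $z \mapsto z^d$ nor to a $\pm$ Chebyshev polynomial, there are no exceptional points of $f$ in $\C$, so $\bigcup_n f^{-n}(z^*)$ is dense in $J(f)$; in particular it meets the open set $U_{m+1}$, giving $w \in U_{m+1} \cap J(f)$ with $f^n(w) = z^*$. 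Since $f(C) = C$ and $z^* \notin C$, we have $w \notin C$, so $w \in U_{m+1} \setminus K(F)$. The orbit of $w$ must therefore eventually leave $U_{m+1}$; if $k \geq 1$ is the first exit time, then $f^{k-1}(w) \in U_{m+1}$, so $f^k(w) \in f(U_{m+1}) = U_m$ while $f^k(w) \notin U_{m+1}$. Since $f^k(w) \in J(f)$ and $\partial U_{m+1} \cap J(f) = \emptyset$, this forces $f^k(w) \in A_m \cap J(f)$.

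With $A_m \cap J(f) \neq \emptyset$ secured, pick any $c \in C$. The set $\bigcup_{N \geq 2} f^{-N}(c)$ is still dense in $J(f)$ (removing finitely many points from a dense subset of the perfect set $J(f)$), so it meets the open set $A_m$: there exist $N \geq 2$ and $w' \in A_m$ with $f^N(w') = c$. Let $V$ be the connected component of $f^{-N}(U_m)$ containing $w'$. The boundary argument from the proof of Lemma~\ref{lem:poly_like}, comparing for a hypothetical $p \in \overline V \cap \partial U_m$ the two expressions for $f^{N+m}(p)$ via $f^m(\overline{U_m}) = \overline{\mathbb D(0,R)}$ and the iterated version of \eqref{eq:large_R}, shows $\overline V \cap \partial U_m = \emptyset$, hence $\overline V \subset U_m$. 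Applied analogously to $\partial U_{m+1}$, writing $f^{N+m}(p) = f^{N-1}(f^{m+1}(p)) \in f^{N-1}(\partial \mathbb D(0,R)) \subset \C \setminus \overline{\mathbb D(0,R)}$, which requires precisely $N \geq 2$, it yields $\overline V \cap \partial U_{m+1} = \emptyset$. Hence $V$ is either contained in $U_{m+1}$ or disjoint from $\overline{U_{m+1}}$; the first alternative is excluded by $w' \in V \cap A_m$, yielding the desired $\overline V \subset U_m \setminus U_{m+1}$ with $N_1 := N$.
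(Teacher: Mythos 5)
Your proof is correct, and it reaches the conclusion by a genuinely different route than the paper. The paper never needs to exhibit Julia points in the annulus $U_m\setminus\overline{U_{m+1}}$: it notes that, since $K(f)$ is disconnected, for some $0\le k\le m$ there is a component $U'$ of $f^{-1}(U_{m-k})$ inside $U_{m-k}$ other than $U_{m+1-k}$ (otherwise the claim in the proof of Lemma~\ref{lem:poly_like} would force $C=K(f)$); pulling $U'$ back under $f^k|_{U_m}$ gives a component $V'$ of $f^{-(m+1)}(\mathbb D(0,R))$ lying in $U_m$ and distinct from $U_{m+1}$, hence with closure disjoint from $\overline{U_{m+1}}$ (here the choice of $\partial\mathbb D(0,R)$ disjoint from the critical orbits is used), and $V$ is then any component of $f^{-(m+1)}(U_m)$ inside $V'$, so that $N_1=m+1$ explicitly. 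You instead first manufacture a Julia point in the annulus by letting a backward-orbit point $w\in U_{m+1}\cap J(f)$ of some $z^*\in J(f)\setminus C$ escape from $U_{m+1}$ (legitimate, since $K(F)=C$ for the chosen $m$), and then drop a component of $f^{-N}(U_m)$ into the annulus using density of $\bigcup_{N\ge 2}f^{-N}(c)$; the confinement of $\overline V$ is the same boundary-escape computation in both arguments. What each buys: the paper's version is self-contained modulo Lemma~\ref{lem:poly_like} and yields an explicit $N_1$, while yours relies on the density of backward orbits in $J(f)$ (a standard fact the paper itself invokes elsewhere, cf.\ the footnote after Lemma~\ref{lem:poly_like} and the proof of Proposition~\ref{prop:interval_components}) and has the merit of making transparent why the annulus must meet $J(f)$, which the paper leaves implicit. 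One small simplification: you do not need to exclude $z^d$ and $\pm$ Chebyshev to get density --- for $z^*\in J(f)$ the backward orbit is dense in $J(f)$ for every polynomial of degree at least $2$, since points of the Julia set are never exceptional.
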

\begin{proof}

Notice that there exists $k: 0\le k \le m$ for which
there exists a component
$U' $ of $f^{-1} (U_{m-k})$ in $U_{m-k}$, different from $U_{m+1-k}$.
Otherwise $C=K(f)$ which is not the case as $C$ is a connected component and $K(f)$ is disconnected.
Compare Proof of Lemma~\ref{lem:poly_like}.

Consider
$V'$, an arbitrary component of $(f^k|_{U_m})^{-1}(U')$. It is a subset of $U_m$ and
a component of $f^{-(m+1)}(\mathbb D(0,R))$.
It is disjoint
from $U_{m+1}$, hence bounded away from $C$.

Note that $f^{m+1}$ maps $V'$ onto the disc $\mathbb D(0,R)$ and
$U_m \subset \mathbb D(0,R))$.  Select
$V$, an arbitrary component of $f^{-(m+1)}(U_m)$ in $V'$.  It satisfies the assertions of our Lemma
with $N_1:=m+1$.



\end{proof}

\

Notice that $V$ intersects $J(f)$ because $U_0$ does and $J(f)$ is completely invariant for $f$.
More precisely $V$ contains branched holomorphic images of $C$, namely components
of $f^{-N_1}(J(f))$. On Figure~\ref{basilicas} these are small 'basilicas' surrounding the big critical fixed one.


\

Having Lemmas~ \ref{lem:key} and \ref{lem:other_branch} at our disposal, we continue the proof of Proposition ~\ref{prop:dim_large}.
In order to simplify   the notation, we pass again to the iterate of $f$, replacing now $f$ by $f^{N_1}$ and $F$ by $F^{N_1}$.
Recall again that this modification does not change the Julia set.
To simplify further the  notation, denote
  $U_0:=U_m(C)$, $U_1:=U_{m+N_1}(C)$.
So, from now on, after this modification, the component $V$ becomes just a component of $f^{-1}(U_0)$ in $U_0$ different from $U_1=U_1(C)$. We denote $f|_V$ by $F_V$. This pairs up with $F$ on $U_1$
denoted also as $F_{U_1}$.

\bigskip

Now we build an infinite collection of multivalued maps $\phi_\ell$, $\ell\in\mathbb N$,  as follows:
Let us note that $F_V:V\to U_0$ is a holomorphic proper map onto $U_0$. Since the degree  of
this map may be larger than one, the inverse may be not well defined. However,
we  shall  use the notation $h: U_0\to V$ to denote the multivalued
inverse of $F_V$.
Thus,  $h$  assigns to a point $x\in U_0$ a collection 
of its
preimages for $F_V$.

\smallskip

Consider a family of multivalued  maps $\phi_\ell:U_0\to U_0$, $\ell=0,1,\dots$,
defined as  multivalued (branched) inverses
\begin{equation}\label{multivalued}
\phi_\ell:=F^{-\ell} \circ h
\end{equation}
where $F^{-\ell}:U_0\to U_\ell$
are multivalued holomorphic maps given by  multivalued (branched) inverses of $F^{\ell}$.
Since $F^{-\ell}$ is pre-composed by $h$ it is meaningful even when restricted to $V$.

So, each multivalued map $\phi_\ell$
assigns to a point $x\in U_0$ some  collection of its preimages under $f^n$, with $n=n_\ell=\ell +1$, all of them being in $U_0$, even in $U_\ell$.

\smallskip

For $x\in U_0$ and
$\ell\in\mathbb N\cup\{0\}$ we write
$\sum |\phi_\ell'(x)|$
to denote the summation of derivatives which runs over all branches of the multivalued map $\phi_\ell$.
If a critical point $c$ and its $f$-image are met then we can put in this sum $\infty$, as inverse
of forward derivative 0.  In fact it does not matter since we can restrict to $x$ not post-critical.

For every  $\ell\in\mathbb N \cup\{0\}$ and $x\in U_0$ denote 
$$
L^*_\ell (f,x)=\log \sum |\phi_\ell'(x)|,
$$
where the summation runs over all branches of the multivalued map $\phi_\ell$.
Denote also by $\underline\ell$ a sequence $\underline \ell=(\ell_1,\dots \ell_k)$, and put
$$
n_{\underline \ell}=n_{\ell_1}+\dots +n_{\ell_k}.
 $$

Finally, for a sequence  $\underline \ell=(\ell_1,\dots \ell_k)$, denote

\begin{equation}\label{L*}
L_{\un\ell}(f,x):= L^*_{\ell_k}(f, y_{k-1})+...+L^*_{\ell_2}(f,y_1)+ L^*_{\ell_1}(f,x),
\end{equation}
 where $y_1=\phi_1(x), ..., y_{k-1}=\phi_k(y_{k-2})$ (remember that the maps $\phi_j$ are multivalued, so we consider sums over their values).
The star $ ^*$ means we consider only preimages along first $h$ and next $F^{-\ell_i}$.
In other words, more formally,
$$
L_{\un\ell}(f,x)=\log\sum|\phi'_{\underline \ell}(x)|,
$$
where the summation runs over all branches of the multivalued function
$$\phi_{\underline \ell}=\phi_{\ell_k}\circ\phi_{l_{k-1}}\circ\dots \circ \phi_{\ell_1}.$$

\smallskip

It is important to note that all the multivalued functions $\phi_{\un\ell}$ are mutually distinct, compare  \emph{free Iterated Function System} in
\cite{InoRiv}.
Indeed, let $\un\ell\not=\un\ell'$,
but $n_{\un\ell}=n_{\un\ell'}$.
Let $i$ be the first integer such that $\ell_i\not=\ell'_i$. Then, supposing that $n_{\ell_i}>n_{\ell'_i}$
we compose in $\un\ell'$ after $\ell'_i$ with $h$ with range in $V$, whereas in $\un\ell$ still within $\ell_i$  with $F^{-1}$
having range
$U_1$. Further compositions by branches of $f^{-1}$ preserve distinction.

\medskip

Denote by $\Sigma^*$ the set of all finite sequences $\underline\ell=(\ell_1,\dots \ell_k)$, $k\ge 1$.
For an arbitrary $x\in U_0$
denote
$$
\Lambda_N(x)=\sum_{\un\ell\in\Sigma^*: n_{\un\ell}=N} \exp L_{\un\ell}(f,x).
$$

 The star $ ^*$ again means we consider only preimages along first $h$ next $F^{-1}$.

\begin{proposition}\label{prop:est_pressure} For every $N\ge 1$ and non-exceptional, in particular not post-critical, $x\in U_0$,
$$
P(1,f; x)\ge
\liminf_{N\to\infty}
\frac{1}{N}
\log \Lambda_N(x).
$$
\end{proposition}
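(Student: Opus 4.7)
The plan is to recognize that the quantity $\Lambda_N(x)$ is nothing more than a \emph{partial sum} of the full Perron--Frobenius type sum
$$\sum_{v\in f^{-N}(x)} |(f^N)'(v)|^{-1}$$
whose $\frac{1}{N}\log$ converges to $P(1,f)$ by Proposition~\ref{prop:bowen}. Once this partial-sum interpretation is in place, the claimed inequality is immediate.

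More concretely, I would proceed in four steps. First, fix $N\ge 1$ and unpack the definition of $\phi_{\underline\ell}$: for each sequence $\underline\ell=(\ell_1,\dots,\ell_k)$ with $n_{\underline\ell}=N$, every choice of branch at each composition step determines a single-valued holomorphic inverse branch of $f^N$ on $U_0$, mapping $x$ to some point $v\in U_0$ that satisfies $f^N(v)=x$. By the chain rule, the modulus of the derivative of this branch at $x$ equals $|(f^N)'(v)|^{-1}$, so that
$$L_{\underline\ell}(f,x)=\log\sum_{v}|(f^N)'(v)|^{-1},$$
the sum running over those $v\in f^{-N}(x)$ produced by the branches of $\phi_{\underline\ell}$.

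Second, I would invoke the \emph{free IFS} property, explicitly recorded in the text just before the Proposition: the multivalued maps $\phi_{\underline\ell}$ corresponding to different sequences $\underline\ell$ (with the same $n_{\underline\ell}=N$) have disjoint sets of values at $x$, because the sequence $\underline\ell$ can be recovered from the induced itinerary of $v,f(v),\ldots,f^{N-1}(v)$ relative to the dichotomy between the components $V$ and $U_1$ of $f^{-1}(U_0)$ in $U_0$. Hence summing over $\underline\ell$ with $n_{\underline\ell}=N$ and over branches produces a collection of \emph{distinct} preimages of $x$ under $f^N$, so that
$$\Lambda_N(x)=\sum_{\underline\ell:\,n_{\underline\ell}=N}\exp L_{\underline\ell}(f,x)\ \le\ \sum_{v\in f^{-N}(x)}|(f^N)'(v)|^{-1}.$$

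Third, apply Proposition~\ref{prop:bowen} with $t=1$ and the non-exceptional point $x\in U_0\setminus E$: the limit
$$\lim_{N\to\infty}\frac{1}{N}\log\sum_{v\in f^{-N}(x)}|(f^N)'(v)|^{-1}=P(1,f;x)=P(1,f)$$
exists. Taking $\frac{1}{N}\log$ of the displayed inequality and passing to $\liminf_{N\to\infty}$ on the left and $\lim_{N\to\infty}$ on the right yields
$$\liminf_{N\to\infty}\frac{1}{N}\log\Lambda_N(x)\ \le\ P(1,f;x),$$
which is the statement of the proposition.

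There is no real obstacle here; the only point requiring slight care is the second step, namely checking that two distinct compositions $\phi_{\underline\ell}$, $\phi_{\underline\ell'}$ with $n_{\underline\ell}=n_{\underline\ell'}$ never produce the same preimage $v$ (so we do not over-count). The text already observes this from the distinction between the range $V$ of $h$ and the range $U_1$ of $F^{-1}$, and this distinction is preserved under further inverse compositions because $\overline V\subset U_0\setminus U_1$.
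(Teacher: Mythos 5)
Your proof is correct and takes essentially the same route as the paper's: $\Lambda_N(x)$ is a sub-sum of the full preimage sum $\sum_{v\in f^{-N}(x)}|(f^N)'(v)|^{-1}$ (using the distinctness of the branches of the $\phi_{\underline\ell}$ recorded just before the proposition), and the inequality then follows from Proposition~\ref{prop:bowen}. Your second step merely makes explicit the non-over-counting argument that the paper dismisses as obvious.
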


\begin{proof}
In view of Proposition \ref{prop:bowen}  it is sufficient to prove
$$
\sum_{y\in f^{-N}(x)}\frac{1}{|(f^N)'(y)|} \ge \Lambda_N(x)
$$
This is however obvious, because on the left hand side all $y\in f^{-N}(x)$ appear, whereas on the right hand side
only selected ones.
\end{proof}

\begin{proposition}\label{prop:pressure_large}
Let $\Lambda_N:=\inf_{x\in U_0}\Lambda_N(x).$ Then
$$
\liminf_{N\to\infty}\frac{1}{N}\log \Lambda_N>0.
$$
\end{proposition}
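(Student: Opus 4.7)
The plan is to reinterpret $\exp L_{\un\ell}(f,x)$ as the iterated action of transfer operators associated with the multivalued maps $\phi_\ell$, show that each single step contributes a uniformly bounded-below factor, and let the combinatorial abundance of length-$k$ tuples $\un\ell$ with $n_{\un\ell}=N$ supply the exponential growth. For each $\ell\ge 0$ I would introduce the positive linear transfer operator $S_\ell$ for $\phi_\ell=F^{-\ell}\circ h$ with potential $-\log|\phi'_\ell|$,
\[
S_\ell(\varphi)(x):=\sum_{\text{branches of }\phi_\ell}\varphi(\phi_\ell(x))\,|\phi'_\ell(x)|.
\]
Factoring $\phi_\ell$ as $h$ followed by $F^{-\ell}$ gives $S_\ell=T_{F_V}\circ T_F^\ell$, where $T_F$ and $T_{F_V}$ are the standard transfer operators for $F$ and $F_V$ with the log-derivative potential. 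Unwinding the definition of $L_{\un\ell}$ then yields, for $\un\ell=(\ell_1,\dots,\ell_k)$,
\[
\exp L_{\un\ell}(f,x)=\bigl(S_{\ell_1}\circ S_{\ell_2}\circ\cdots\circ S_{\ell_k}\bigr)(\1)(x).
\]

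The key step is to exhibit $\delta_0>0$ such that $S_\ell(\1)(x)\ge\delta_0$ for every $x\in U_0$ and every $\ell\ge 0$. For $\ell\ge 1$, each branch point $w=h(x)\in V$ lies in $U_0\setminus C$ (since $\overline V\cap U_1=\emptyset$ by Lemma~\ref{lem:other_branch} and $C\subset U_1$), so Lemma~\ref{lem:key} yields $T_F^\ell(\1)(w)\ge e^{-C_0}$ and consequently $S_\ell(\1)(x)\ge e^{-C_0}\,T_{F_V}(\1)(x)$. For $\ell=0$, $S_0(\1)=T_{F_V}(\1)$ directly. So it remains to bound $T_{F_V}(\1)$ below. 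Set $M:=\sup_{\overline V}|f'|$, which is finite because $\overline V$ is a compact subset of $\C$. Since $F_V\colon V\to U_0$ is proper (as $\overline V\subset U_0$ forces $f(\partial V)\subset\partial U_0$) and non-constant, it is surjective, so every $x\in U_0$ has at least one preimage $w\in V$ contributing $|F'_V(w)|^{-1}\ge 1/M$ to the sum, whence $T_{F_V}(\1)(x)\ge 1/M$. Thus $\delta_0:=e^{-C_0}/M$ works.

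Since each $S_\ell$ is positive and linear, the pointwise bound $S_\ell(\1)\ge\delta_0\1$ iterates to $(S_{\ell_1}\circ\cdots\circ S_{\ell_k})(\1)\ge\delta_0^k$ by a one-line induction. The tuples $\un\ell$ of length $k$ with $n_{\un\ell}=N$ are in bijection, via $m_i:=\ell_i+1$, with compositions of $N$ into $k$ positive parts, of which there are $\binom{N-1}{k-1}$. Hence
\[
\Lambda_N(x)\ge\sum_{k=1}^{N}\binom{N-1}{k-1}\delta_0^k=\delta_0(1+\delta_0)^{N-1}
\]
uniformly in $x\in U_0$, so $\liminf_{N\to\infty}\tfrac{1}{N}\log\Lambda_N\ge\log(1+\delta_0)>0$. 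The main obstacle is the uniform lower bound $S_\ell(\1)\ge\delta_0$ for $\ell\ge 1$: that is precisely where the analytic-arc hypothesis is used, via Lemma~\ref{lem:key} and the Zhukovsky-lift argument behind it; the $\ell=0$ case and the boundedness of $|F'_V|$ on $\overline V$ are purely geometric.
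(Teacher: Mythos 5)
Your proof is correct and follows essentially the same route as the paper's: a uniform per-block lower bound $\exp L^*_\ell(f,x)\ge ab$ (your $\delta_0$, with $b=e^{-C_0}$ from Lemma~\ref{lem:key} and $a=\inf_x\sum|h'(x)|$, which you additionally justify via $a\ge 1/\sup_{\overline V}|f'|$), iterated over the $k$ blocks and combined with the $\binom{N-1}{k-1}$ count of compositions of $N$, yielding $\Lambda_N(x)\ge ab(1+ab)^{N-1}$. The transfer-operator packaging is only a notational variant of the paper's argument.
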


\begin{proof}
Put $a:=\inf_{x\in U_0}\left (\sum |h'(x)|\right )$, and $b:=\exp(-C_0)$ where $C_0$ comes from Lemma ~\ref{lem:key}.

Consider all  sequences $\underline\ell=(\ell_1,\dots \ell_k)$ such that $n_{\underline \ell}=N$, for each  $k\le N$. The number of such sequences
can be calculated
in the following way:

For each $k\le N$, there are ${N-1}\choose {k-1}$ ways of choosing $k-1$ positions $m_1,\dots m_{k-1}$ from the sequence $\{1,\dots N-1\}$.\footnote{The $\underline\ell$ has length $N$, but its first place is occupied by the beginning of the first block. So the only choices for the beginnings of remaining $k-1$ blocks are the remaining $N-1$ places.}
Having these positions chosen, we assign to them the values $\ell_1=m_1-1$, $\ell_2=(m_2-m_1)-1$, $\ell_k=(N-m_{k-1})-1$ and  the (multivalued) map
$\phi_{\underline\ell}$ with  $\underline \ell=(\ell_1,\dots\ell_k)$ and $n_{\underline\ell}=(\ell_1+1)+(\ell_2+1)+\dots +(\ell_k+1)=N$.

\smallskip

 Then we have the estimate

 $$\exp L_{\un\ell}(f,x)\ge a^kb^k$$
 and, consequently,

 $$\sum_{\un\ell\in\Sigma^*: n_{\un\ell}=N} \exp L_{\un\ell}(f,x)\ge \sum_{k=1}^{N}{{N-1}\choose {k-1}}(ab)^k =ab(1+ab)^{N-1}.$$

\end{proof}

{\bf Remark.}\
Let us note that a calculation in a similar spirit appeared in \cite{InoRiv} and in \cite{ms}.
The authors used there a method of generating functions (some power series with coefficients related to a value similar to $\Lambda_N$).
Our case is simpler, in a sense that the set of admissible values $n_\ell$ forms an arithmetic sequence. So, the straightforward calculation provided above is sufficient.\footnote{However in Section \ref{final}, Comment 1, we use the general, power series, version.}


\bigskip

Obviously, combining Proposition~\ref{prop:est_pressure} and Proposition~\ref{prop:pressure_large}, together with Proposition ~\ref{prop:bowen}, we conclude the proof of Proposition~\ref{prop:dim_large}.

Indeed,  Proposition~\ref{prop:est_pressure} and Proposition~\ref{prop:pressure_large} imply immediately that for every non-exceptional point $x\in U_0$ we have that
$$P(1,f;x)>0,$$
which implies (see Proposition~\ref{prop:bowen}) that $P(1,f)>0$.
Moreover,  Proposition~\ref{prop:bowen} asserts that
$${\rm HD}_{\hyp}(J(f))=\inf\{t>0: P(t,f)\le 0\}.$$
Since the function  $t\mapsto P(t, f)$ is continuous and non--increasing for $t\ge 0$,
we conclude immediately that
$${\rm HD}_{\hyp}(J(f))>1.$$

\end{proof}

\section{Example}\label{sec:example}

To complete the answer to the question of Christopher Bishop, it remains to ask whether there exists a polynomial with disconnected Julia set, and a connected component of $J(f)$ being an analytically embedded interval.

In the following proposition we provide a family of maps with this property.

\begin{prop}\label{prop:interval_components}
Consider a family of   cubic polynomials
\begin{equation}\label{eq:example}
f_{\varepsilon,\beta}(z):=
\varepsilon z^3+z^2-\beta,
\end{equation}
with $\varepsilon$  and $\beta$ real, except $\varepsilon=0$ for which the polynomials are quadratic.
Then there exists an analytic curve $\Gamma$ of parameters $(\varepsilon,\beta)$, which is the graph of an analytic function $\varepsilon\mapsto \beta(\varepsilon)$ for $\varepsilon\approx 0$,
passing through the  point $(0,2)$, and such that for every $(\varepsilon, \beta)\in\Gamma$ with $\varepsilon> 0$ the Julia set of $f_{\varepsilon,\beta}$ is disconnected,
and contains infinitely many components being analytic arcs.

\end{prop}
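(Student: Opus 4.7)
The plan is to locate $\Gamma$ as the analytic locus on which the critical point $0$ is strictly preperiodic with exactly the combinatorics of the Chebyshev polynomial $z^2-2$, then to show that the resulting periodic component $C$ of $K(f)$ containing $0$ is a real interval via a real-symmetric hybrid straightening of a polynomial-like restriction, and finally to obtain infinitely many analytic arc components as conformal preimages of $C$.

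\emph{The curve $\Gamma$.} The fixed-point equation $\varepsilon z^3 + z^2 - z - \beta = 0$ is satisfied at $(\varepsilon,\beta,z)=(0,2,2)$ with nonvanishing $z$-derivative there, so the implicit function theorem yields an analytic solution $z_*(\varepsilon,\beta)$ near $(0,2)$ with $z_*(0,2)=2$. The Misiurewicz-type condition $f^2(0)=z_*$ reads $H(\varepsilon,\beta):=-\varepsilon\beta^3+\beta^2-\beta-z_*(\varepsilon,\beta)=0$; one checks $H(0,2)=0$ and $\partial_\beta H(0,2)=8/3\ne 0$, and a second application of the implicit function theorem furnishes an analytic $\beta(\varepsilon)$ with $\beta(0)=2$ defining $\Gamma$. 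For $\varepsilon>0$ small on $\Gamma$, the second critical point $c_1=-2/(3\varepsilon)$ has $|f(c_1)|\sim 4/(27\varepsilon^2)$ and hence escapes to $\infty$, whereas $0$ stays in $K(f)$; thus $J(f)$ is disconnected.

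\emph{The component containing $0$ is a real interval.} Let $C$ be the connected component of $K(f)$ containing $0$. At $\varepsilon=0$ the three points $0,-\beta,z_*$ all lie in the unique component $[-2,2]$, so by continuity of the polynomial-like restriction of degree $2$ near $[-2,2]$ (close to $f_0=z^2-2$ on a neighbourhood of $[-2,2]$), for $\varepsilon>0$ small they still lie in the same component $C$; hence $f(C)=C$ and $C$ is a non-trivial fixed critical component of $K(f)$. Lemma~\ref{lem:poly_like} then produces a polynomial-like restriction $F:U_{m+1}(C)\to U_m(C)$ of degree $2$ (the other critical point $c_1$ is far away) with $K(F)=C$. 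The critical orbit $0\to-\beta\to z_*\mapsto z_*$ realises precisely the combinatorics of $z^2-2$ on $[-2,2]$, so by the Douady--Hubbard Straightening Theorem $F$ is hybrid conjugate to $z^2-2$. Because $f$ has real coefficients, the $F$-invariant almost-complex structure used in the straightening construction can be chosen invariant under complex conjugation, and so the suitably normalised conjugating map $\phi$ satisfies $\phi(\bar z)=\overline{\phi(z)}$. Consequently $C=\phi^{-1}([-2,2])\subset\phi^{-1}(\mathbb R)\subset\mathbb R$; a Jordan arc contained in $\mathbb R$ (with endpoints $z_*$ and $-\beta$) is a closed real interval, in particular an analytic arc.

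\emph{Infinitely many analytic arc components.} Since $f|_C$ has degree $2$ while $\deg f=3$, the preimage $f^{-1}(C)$ is the union of $C$ itself and exactly one further component $\widetilde C$ mapped homeomorphically onto $C$; as $\widetilde C$ contains no critical point, the map $f|_{\widetilde C}:\widetilde C\to C$ is conformal and $\widetilde C$ is itself an analytic arc. Moreover $\widetilde C$ is a component of $K(f)$ (the standard argument: any larger component would map under $f$ into $C$ and hence lie in $C\cup \widetilde C$, forcing equality with $\widetilde C$). Since the only critical value in $K(f)$ is $-\beta\in C$, the iterated preimages $f^{-n}(\widetilde C)$ are unramified and split into $3^n$ conformal, hence analytic, copies of $\widetilde C$, each a distinct component of $K(f)$, producing infinitely many analytic arc components. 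The delicate point of the whole proof is the analyticity of $C$: the real-symmetry trick in the previous paragraph sidesteps the need to upgrade the a priori only quasiconformal hybrid conjugacy to a conformal one, replacing this by the elementary observation that a symmetric QC image of $[-2,2]$ must lie in $\mathbb R$.
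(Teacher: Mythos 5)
Your proposal is correct and follows the paper's skeleton in the construction of $\Gamma$ (the same implicit-function-theorem computation giving $\partial_\beta(\gamma-p)(0,2)=8/3\neq 0$), in the escape of the second critical point, and in producing the remaining arc components as conformal preimages of the invariant one. Where you genuinely diverge is the central step, proving that the fixed component $C$ is an interval. The paper only uses the Straightening Theorem softly: $F$ is quasiconformally conjugate to \emph{some} quadratic polynomial, hence backward orbits under $F$ are dense in $C=J(F)$; since $F|_{I_{\varepsilon,\beta}}$ is already two-to-one onto $I_{\varepsilon,\beta}$ and $\deg F=2$, one has $F^{-1}(I_{\varepsilon,\beta})=I_{\varepsilon,\beta}$, so these dense backward orbits never leave the interval and $C=I_{\varepsilon,\beta}$. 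You instead identify the hybrid class and invoke a conjugation-equivariant straightening to force $C=\phi^{-1}([-2,2])\subset\mathbb R$. This works, but it rests on two assertions you should substantiate: (a) that the straightened quadratic is exactly $z^2-2$ --- the Straightening Theorem only gives \emph{some} $z^2+c$ with connected filled Julia set, and you must add the (easy) computation that $c=-2$ is the unique such parameter for which the critical point is strictly preperiodic and falls onto a fixed point after two iterates (the equation $(2c^2+2c-1)^2=1-4c$ has only the roots $c=0$, excluded since $0$ is then periodic, and $c=-2$); and (b) that for a real-symmetric polynomial-like map with symmetric domains the hybrid conjugacy can be chosen to commute with complex conjugation --- true and standard in real renormalization theory, but it is a statement about the \emph{construction} of the straightening, not its bare conclusion, so it deserves a reference or a sketch (symmetric interpolating annulus map, symmetric Beltrami coefficient, symmetric normalization). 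The paper's route buys you independence from both points at the cost of the small observation $F^{-1}(I_{\varepsilon,\beta})=I_{\varepsilon,\beta}$; your route buys a sharper conclusion (the explicit hybrid class and the reality of $C$) at the cost of these two extra ingredients. Your treatment of the preimage components is, if anything, more careful than the paper's one-sentence dismissal, and the counting by $3^n$ unramified branches is correct because the only critical value in $K(f)$ is $-\beta\in C$, which no preimage component of $\widetilde C$ ever meets.
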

\begin{proof}
We start with the quadratic Chebyshev polynomial $f_{0,2}(z)=z^2-2$. Its Julia set is just the interval $I:=[-2,2]$.

Consider now the  polynomials $f_{\varepsilon,\beta}$
with $\varepsilon$ real and close to $0$, and $\beta$ real and close to $2$.
The parameters $(\varepsilon,\beta)=(0,2)$ will be called \emph{initial parameters}.

The map $f_{\varepsilon,\beta}$ has a real repelling fixed point, denoted by $p_{\varepsilon, \beta}$, close to $p_{0,2}=2$, and $0$ is a (not moving) critical point of $f_{\varepsilon,\beta}$.

\begin{lemma}
There exists an analytic curve $\Gamma$  of parameters $(\varepsilon, \beta)$, passing through the initial parameters $(0,2)$ for which

$$f^2_{\varepsilon,\beta}(0)=p_{\varepsilon,\beta}.$$
Moreover $\partial \Gamma/\partial \beta \not=0$ at the initial parameters and in consequence
$\Gamma$ is the graph of an analytic function $\varepsilon \mapsto \beta(\varepsilon)$.
\end{lemma}
\begin{proof}
This is a straightforward calculation.
Denoting
$\gamma(\varepsilon, \beta)=f^2_{\varepsilon,\beta}(0)$,
we have $\gamma(\varepsilon, \beta)=-\varepsilon\beta^3+\beta^2-\beta$,
so
$$
{\rm grad}(\gamma)(0,2)=\overrightarrow{(-8,3)}.
$$

Denoting by $p(\varepsilon,\beta)$ the fixed point of $f_{\varepsilon,\beta}$ close to $2$,
we calculate (differentiating  the implicit function)
$${\rm grad}(p)(0,2)=\overrightarrow{(-8/3, 1/3)}.$$

Thus, $$
{\rm grad}(\gamma-p)(0,2)\neq \overrightarrow{(0,0)}
$$
and therefore, there exists a smooth curve of parameters $(\varepsilon, \beta)$, passing through the initial parameters $(0,2)$ for which
$$
f^2_{\varepsilon,\beta}(0)=p_{\varepsilon,\beta}.
$$
In particular $(\partial(\gamma-p)/\partial \beta) (0,2) = 3-1/3 \not=0$
hence by implicit function theorem
$\Gamma$ is indeed the graph of an analytic function $\beta(\varepsilon)$.
\end{proof}
Notice that $f_{\varepsilon,\beta}(0)=-\beta$. The interval $I_{\varepsilon,\beta}=[-\beta, p_{\varepsilon, \beta}]$ is thus invariant under
$f_{\varepsilon,\beta}$ and the map
$f_{\varepsilon,\beta}: I_{\varepsilon,\beta}\to I_{\varepsilon,\beta}$ is
two-to-one, with critical point at $0$, hence topologically conjugate to the quadratic Chebyshev polynomial $z^2-2$ on $I$ as above.

\medskip

Suppose from now on that $\varepsilon>0, \varepsilon\approx 0$ and $\beta=\beta(\varepsilon)$. Let us note that the Julia set $J(f_{\varepsilon,\beta})$ is not connected, as the trajectory of the second critical point $c=-\frac{2}{3\varepsilon}$ tends to infinity.
The latter holds since $f_{\varepsilon,\beta}(-\frac{2}{3\varepsilon})>p_{\varepsilon,\beta}$
 is repelled to infinity under further action of $f_{\varepsilon,\beta}$.

Denote by $C$ the connected component
of $J(f_{\varepsilon, \beta})$ containing $I_{\varepsilon,\beta}$, so in particular it is fixed under $f_{\varepsilon,\beta}$ and is non-trivial.

By  Lemma~\ref{lem:poly_like}, there exist connected  neighbourhoods of $ C$,
$$ C\subset U_1\subset \overline U_1\subset U_0,$$ such that the map  $F:={(f_{\varepsilon,\beta})}_{|U_1}:U_1\to U_0$ is a polynomial-like map,
and $ C$ is its filled-in Julia set. Since the Julia set of $f_{\varepsilon,\beta}$ is disconnected, then by  Remark 2. after
Proof of Lemma \ref{lem:poly_like}, the degree of
$F$
is not maximal, so it is equal to $2$.
By \cite[Theorem 1 (The Straightening Theorem)]{dh}, see also more recent \cite[Theorem 7.4]{bf},  $F$
is quasiconformally conjugate to a true quadratic
polynomial.  Therefore,  preimages of every point in $ C$ are dense in $
 C$. But the preimages under  $F$ of points from
$I_{\varepsilon,\beta}$ remain in $I_{\varepsilon,\beta}$, which implies that
$$ C=I_{\varepsilon,\beta}.$$

So, each map $f_{\varepsilon,\beta}$ with $(\varepsilon, \beta)\in\Gamma$ is  a polynomial of degree 3 with disconnected Julia set,  for which the Julia set has an invariant component  being a true interval on which the degree of the map is equal to 2.
So, for each such map the filled-in Julia set, here equal to the Julia set,  has a collection of countably many non-trivial components, each of them being an analytic arc; this collection is formed by the invariant analytic arc and all its preimages under the  iterates of $f_{\varepsilon,\beta}$.

Note also that these are the only non-trivial components of the filled-in Julia set $K(f_{\varepsilon,\beta})$.
Indeed, by Theorem A all non-trivial components are eventually periodic and by Remark 1 after Lemma~\ref{lem:poly_like}
every non-trivial periodic component of the filled-in Julia set has to contain a critical point in its orbit.
In our situation,  there are two critical points; one of them is escaping, and the other one is already contained in the invariant interval $I_{\varepsilon,\beta}$.

\end{proof}

 \medskip


\section{Final remarks and questions}\label{final}

\

\noindent COMMENT 1. \ For $f$ as in Proposition \ref{prop:dim_large} and $\partial C$ the boundary of a connected components $C$ of filled-in Julia set
$K(f)$, one can ask under what assumptions it holds

\begin{equation}\label{strict_ineq}
\HD(\partial C)< \HD(J(f)).
\end{equation}

It does not hold in general. It may happen that the component $C$ has empty interior (so is the Julia set of the map $F$ introduced in Lemma~\ref{lem:poly_like}), but it satisfies
\ $\HD(J(F))=HD(J(f))=2$ and even $\HD_{\rm hyp}(J(F))=2$.
Indeed,
just in the example $f_{\varepsilon,\beta}(z)=\varepsilon z^3 + z^2-\beta$ with $\varepsilon>0$ (but $\beta\not=\beta(\varepsilon)$), we find $\varepsilon\approx 0$ and complex $\beta \approx 2$
such that $\HD_{\rm hyp}(J(F))=2$, by finding\footnote{See \cite{dh} or \cite[Theorem 7.8]{bf}.
The family $\mathcal F$ of quadratic-like maps $f_{\varepsilon,\beta}$ for an arbitrary small $\varepsilon$ for $|\beta|<3$ and $|z|<3$ is Mandelbrot-like, as a small perturbation of the family
$z^2-\beta$. So there is 1-to-1 (homeomorphic) correspondence by hybrid (in particular quasiconformal) equivalences of its elements with connected filled-in Julia sets, to polynomials $z^2+c$ with $c$ in the Mandelbrot set. This is a
parameter version of Douady-Hubbard's Straightening Theorem. }
parameters so that the quadratic polynomial
hybrid equivalent to $F$ satisfies this, \cite{Shishikura}.
Note that a quasiconformal conjugacy cannot drop neither Hausdorff nor hyperbolic dimension down from 2, see  \cite{Astala}.

\medskip

Proving Proposition \ref{prop:dim_large} we proved and used the fact
that $f$  is ``almost hyperbolic'', here: topologically (analytically) conjugate to a $\pm$ Chebyshev polynomial
on a neighbourhood of $C$.
An even easier case is where $f$ is hyperbolic (expanding) on $C$. Then Lemma \ref{lem:key}, leading to \eqref{strict_ineq},
holds easily.
In fact \eqref{strict_ineq} holds 
under more general assumptions, namely if $F$ is \emph{non-uniformly hyperbolic} here in the version that it satisfies \emph{Topological Collet-Eckmann}, TCE, see \cite{PRS}. One out of several definitions equivalent to TCE,
called \emph{Backward Exponential Shrinking},
says:

There exist $\lambda>1$ and $r>0$  such that for every $x \in \partial C=J(F)$,
every $n > 0$ and every connected component $W_n$ of $F^{-n}({\mathbb D}(x, r))$ for the Euclidean disc
${\mathbb D}(x,r)$ with radius $r$ and origin at $x$,
   it holds     $\diam (W_n) \le \lambda^{-n}$.

 \begin{proof}[Sketch of proof of \eqref{strict_ineq} for $C$ and $F$ satisfying TCE] Assume for simplification

 \noindent that $\partial C $ contains just one $f$-critical point. We shall apply an analogon of
 Lemma~\ref{lem:key},
 where $F$ is  replaced by a
\emph{Canonical induced map} $G$,
 which is a  map of return (not first return !) to a \emph{nice} set $\mathcal W$ for iteration of $F$, \ $G(x):=F^{m(x)}(x)$. We refer here to  a theory developed by the first author and J. Rivera-Letelier in \cite{PRL2} and
 \cite{PR-L}. In particular see Section 3 of the latter paper. $G^{-1}$ forms an Iterated Function System with infinitely many branches from $\mathcal W$ into itself, with bounded distortion.
 Consider the pressure
 \begin{equation}\label{return_pressure}
 \mathcal P(t,p,G):=
 \lim_{n\to\infty}\frac1n \log
 \sum_{v\in G^{-n}(w)} \exp S_{n,G}(\psi_G)(v)
 \end{equation}
 for
 $\psi_G:=-t\log (|G'|-pm)$
 where $S_{n,G}(\psi_G):=\sum_{j=0}^{n-1}\psi_G \circ G^j$,
 not depending on $w$ due to bounded distortion.
 Compare Proposition \ref{prop:bowen} and \eqref{eq:L_n}.
 Now we apply \cite[Subsection 7.2]{PR-L} which says that $\mathcal P(t,P(t,F), G)=0$
 for $t=t_0$ being zero of $P(t,F)$
 Next denote $\psi:= -t_0 \log|F'| $.
 Let $h$
be an additional branch of $f^{-m_0}$ from $\mathcal W$ into itself for some $m_0\in\N$, compare
Lemma~\ref{lem:other_branch}.\footnote{The nice set $\mathcal W$ is a small neighbourhood of the set
$\Crit(F,J)$ of all
$F$-critical points in $J(F)$. If $\#\Crit(F,J)>1$, it has more than one component.  So this needs some care. In place of the IFS one applies a Graph Directed Markov System  and defines $\mathcal P$
appropriately, see \cite{MU}. The branch of $h$ above means a branch for each component of $\mathcal W$,
where it is comfortable to choose  $m_0$ common.
Such a choice is possible due to the \emph{topological exactness} of $f$ on $J(f)$, what means that for every non-empty set $A\subset J(f)$ open in
$J(f)$ there exists $n\in\N$ such that $f^n(A)=J(f)$. Compare \cite[Proof of Lemma 4.1]{PRL2}.}
Now,
for each $\ell\in\mathbb N$ consider the multivalued function $\phi_\ell:=G^{-\ell} \circ h$.
We obtain the key estimate
 similar to \eqref{multivalued} in Proof of Proposition \ref{prop:dim_large}
 \begin{equation}\label{key2}
\log \sum_{y\in \phi_\ell (x)}\exp S_{m_\ell}(\psi)(y) >-C_0
\end{equation}
for  $m_\ell:={m(y)+m(G(y))+... m(G^{\ell-1}(y))+m_0}$.
Here $S_n$ means $S_{n,F}:=\sum_{j=0}^{n-1}\psi \circ F^j$.
The estimate \eqref{key2} follows from \cite[Section 6]{MU} and \cite[Subsection 4.3. \emph{Existence}]{PR-L}; $G$ is hyperbolic, but with infinitely many branches.
For each $\ell$ define the series

\medskip

$\Phi_\ell (s):=\sum_{N=1}^\infty A_{\ell, N} s^N$ for complex $s$,
with coefficients
$$
A_{\ell,N}=\sum_{y\in \phi_\ell(x), m_{\ell}(y)=N} \exp S_N \psi(y).
$$
Notice that \eqref{key2} yields
$$
\Phi_\ell (1)=\sum_N  A_{\ell,N}> \exp -C_0.
$$
So, for every $L$ there exists $N$,
such that for every $\ell<L$,
$$
\sum_{n\le N} A_{n,\ell} \ge \frac12 \exp -C_0.
$$
So, for $\Phi(s):=\sum_\ell \Phi_\ell(s)$ we get
$$
\Phi(1)\ge \sum_{n\le N, \ell<L} A_{n,\ell} \ge \frac12 L \exp -C_0>2
$$
for $L$ large enough. Hence also $\Phi(s)>2$ for some $s<1$.

So, $\Phi(s)+\Phi(s)^2+ ... =\infty$, hence the radius of convergence of  $\Phi$ is at most $s$.
So $P(t_0, f )>P(t_0, F)=0$. 
Hence the (unique) zero of the function $P(t, f)$, equal to $\HD_{\rm hyp}(J(f))$ \ ($=\HD(J(f))$ due to TCE),
is strictly larger than $t_0=\HD_{\rm hyp}(J(F))=\HD(J(F))$.
\end{proof}

Notice that in the situation of Propositon 5 , given $\ell$,\  $\Phi_\ell=A_{m_\ell,\ell} s^{m_\ell}$, where the time of each backward branch for its iterate of $f$ is fixed, equal to $m_\ell$,  whereas here it is not.

\medskip



{\bf Question.} How TCE can be weakened so that \eqref{strict_ineq} still holds?

\

\

\noindent COMMENT 2. \ Theorems
\ref{thm:main}
and
\ref{thm:HDabove}
hold for $f$ being polynomial-like, so in the situation more general than for $f$ being a polynomial. Proofs are the same.\footnote{We do not know whether we can refer to Theorems \ref{thm:main}
and
\ref{thm:HDabove} for polynomials using straightening, since
quasi-conformal homeomorphisms may in general change Hausdorff dimension larger than 1 to equal to 1.}

\

\

\noindent COMMENT 3. \ A positive answer to the following conjecture would generalize Theorem \ref{thm:HDabove} to rational maps:

 \begin{conjecture}\label{conj:main}
Let $f:\widehat{\mathbb C}\to\widehat{\mathbb C}$ be a rational function  of degree at least 2. Assume that  the Julia set $J(f)$ is connected. Then $f$ is either a finite Blaschke product
in some holomorphic coordinates or a quotient of a Blaschke product by a rational function of degree 2,
or $\HD(J(f))>1$.
\end{conjecture}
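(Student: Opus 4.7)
The strategy is to mimic the proof of Theorem~\ref{thm:HDabove} by locating an RB-domain inside the Fatou set of $f$ and applying Theorem~B. The difference with the polynomial case is that a rational map has no canonical basin of infinity, so one must search for the RB-domain among the periodic Fatou components. First I would reduce to the case $\mathcal F(f)\neq\emptyset$ (otherwise $J(f)=\widehat{\mathbb C}$ has dimension $2$), and then, by Sullivan's no-wandering-domains theorem and the classification of periodic Fatou components, after replacing $f$ by an iterate, assume there is a fixed Fatou component $U$ of one of four types: (super-)attracting basin, parabolic basin, Siegel disk, or Herman ring.

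In the expanding cases (attracting and parabolic) $U$ should qualify as an RB-domain in the sense of \eqref{eq:def_RB}: the auxiliary neighbourhood is a thin collar of $\partial U$ inside $U$, the invariance $f(\partial U)=\partial U$ follows from properness of $f\colon\overline U\to\overline U$, and the shrinking condition holds because orbits near $\partial U$ inside $U$ are attracted to the sink and leave any such collar in finite time; in the parabolic case the RB argument applies after excising small horocyclic neighbourhoods of the parabolic cycle on $\partial U$. Applying Theorem~B then yields either $\HD_{\hyp}(\partial U)>1$, whence $\HD(J(f))\ge \HD(\partial U)>1$, or $\partial U$ is an analytic Jordan curve (respectively an analytically embedded arc) with $f|_{\partial U}$ topologically conjugate to $z^d$ (respectively $\pm$ Chebyshev). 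In this last alternative, to pass from local boundary rigidity to the global classification I would repeat the argument at the end of Section~\ref{proof_thm1}: Schwarz reflection across $\partial U$ combined with \cite[Lemma 9.1]{Brolin} produces a round-circle model in the appropriate uniformizing coordinates, and connectedness of $J(f)$ forces $J(f)=\partial U$, so that a Riemann map of the opposite complementary component conjugates $f$ globally to a finite Blaschke product; the arc case reduces to the curve case via the Zhukovsky lift of Lemma~\ref{lem:key}, exhibiting $f$ as a degree-$2$ quotient of a Blaschke product.

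The main obstacle is the rotational case: a Siegel disk or Herman ring is \emph{not} an RB-domain, because its interior dynamics is an irrational rotation, so no expansion of $\partial U$ is available from within $U$ and Theorem~B fails outright. One plausible route is to exclude this scenario whenever $\HD(J(f))=1$ by a free-IFS construction in the spirit of Section~\ref{above}: repelling periodic orbits are dense in $J(f)$ and accumulate on $\partial U$, and one would try to combine a contracting inverse branch analogous to $h$ in \eqref{multivalued}, obtained by following such an orbit, with inverse branches of high iterates of $f$ defined near $\partial U$ from outside $\overline U$, producing a free IFS whose geometric pressure at $t=1$ is strictly positive. The difficulty is that the transverse expansion must come entirely from outside $\overline U$, and quantifying it enough to outweigh the very weak regularity of $\partial U$ seems to require structural information on the boundaries of rotation domains that is currently available only under arithmetic conditions on the rotation number; this is presumably why the statement remains conjectural.
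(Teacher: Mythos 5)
The statement you are trying to prove is explicitly a \emph{conjecture} in the paper: the authors offer no proof, only the partial remarks in Comment~3 of Section~\ref{final}. Your proposal does not close the gap either, and you essentially concede this yourself in your last paragraph, so the honest verdict is that the argument is incomplete in exactly the place where the problem is genuinely open. Your treatment of the attracting case does coincide with the paper's remark (apply Theorem~B to the immediate basin $\Omega$, which is simply connected because $J(f)$ is connected --- a hypothesis you should state explicitly, since simple connectivity is part of the RB-domain definition), and your identification of rotation domains as the true obstruction matches the authors' own assessment.

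However, you overstate the parabolic case. A parabolic basin is \emph{not} an RB-domain in the sense of \eqref{eq:def_RB}: a point of $U\cap\Omega$ lying in an attracting petal converges to the parabolic point, which belongs to $\partial\Omega$, so its entire forward orbit remains in any collar $U$ of $\partial\Omega$; hence $\bigcap_{k\ge 0}f^{-k}(U\cap\overline\Omega)$ strictly contains $\partial\Omega$ and the third condition of the definition fails. ``Excising small horocyclic neighbourhoods'' does not repair this, because Theorem~B is proved for the RB structure as defined and you would need to reprove it for the modified object; this is precisely why the authors write only that they ``believe'' the parabolic case, rather than asserting it. Finally, in the Siegel/Herman case your free-IFS sketch has no source of expansion transverse to $\partial U$ and no replacement for Lemma~\ref{lem:key}; the known results quoted in the paper (hairy Siegel discs, the Graczyk--Jones bound for constant type) all rest on arithmetic hypotheses on the rotation number and on the existence of a basin of infinity, neither of which is available for a general rational map. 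So the proposal reproduces the paper's partial progress but does not constitute a proof of the conjecture.
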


If $f$ has an attracting periodic orbit, then the positive answer is given by Theorem B applied to
$\Omega$ being the immediate basin of attraction to a point of the orbit for an iterate of $f$. Indeed $\Omega$ is simply connected by the connectivity of $J(f)$. More precisely, either $\HD_{\hyp}(J(f))>1$ or due to
\cite[Theorem A]{feliks} it is like in the assertion of the conjecture.

We believe that the same holds if $f$ has a parabolic periodic orbit.

\medskip

Another possibility is that the Fatou set consists only of Siegel discs and their preimages
for iterates of $f$.

Before discussing this, consider  $f=P_\alpha$, a quadratic polynomial ${\rm e}^{2\pi i\alpha}z +z^2$
such that
$\alpha$ is real irrational of sufficiently high type, namely all the coefficients
in its continued fraction
expansion are larger than certain constant.  Then, if $\alpha\in {\mathcal B} \setminus {\mathcal H}$, i.e. satisfying Brjuno
condition, (equivalent to the linearizability at 0, yielding a Siegel disc $S$), but not satisfying Herman condition (in particular not
Diophantine), the Siegel disc is \emph{hairy} outside. Namely there is a Cantor bouquet of lines
in $J(P_\alpha)$ having together Hausdorff dimension 2. See \cite{cdy} and references therein.

If $\alpha$ is Diophantine, of constant type, namely all the coefficients in its continued fraction expansion are smaller than a constant, then  $\HD(\partial S)>1$, see \cite{gj}.

\smallskip

In those situations however, $P_\alpha$ are polynomials so they have basins of attraction to infinity in their Fatou sets and Theorem 2
proves the Conjecture.
But maybe for a rational map $f$ the above theory can also be applied, so
in the ``Siegel'' cases above, Conjecture \ref{conj:main} holds true.

\medskip

In the remaining case of Herman rings, the situation is expected to be similar to Siegel's.

\medskip

Notice that a positive answer to Conjecture \ref{conj:main} follows immediately from the Comment 2,
for $f$ being renormalizable.
\emph{Renormalizable} means here, that there exist an integer $r\ge 2$, a compact connected
set $C$ having more than one point, intersecting $J(f)$, and its neighbourhoods $U$ and $U'$,  such that $F=f^r|_{U'}:U'\to U$ is polynomial-like with the filled-in Julia set equal to $C$.
Unlike before, we do not assume here that $\partial C$ is a connectivity component of $J(f)$.
See examples of such renormalizations for rational Newton maps in \cite{DS}.

\thebibliography{99}

\bibitem{Astala} K. Astala,
\emph{Area distortion of quasiconformal mappings},
Acta Math. 173 (1994), no. 1, 37--60.

\bibitem{bishop} Ch. Bishop, \emph{A transcendental Julia set of dimension 1}, Invent. math. 212 (2018), no. 2, 407--460.

\bibitem{BraHub} B. Branner, J. H. Hubbard, \emph{The iteration of cubic polynomials, Part
II: Patterns and parapatterns}, Acta Math. 169 (1992), 229--325.

\bibitem{bf} B. Branner, N. Fagella, Quasiconformal Surgery in Holomorphic Dynamics. Cambridge University Press, 2014.

\bibitem{Brolin} H. Brolin, \emph{Invariant sets under iteration of rational functions}, Ark. Mat.  6.6 (1965), 103--144.

\bibitem{cdy} D. Cheraghi, A. DeZotti, Fei Yang, \emph{Dimension paradox of irrationally indifferent attractors},
arXiv:2003.12340v1.

\bibitem{dfsu}  T. Das, L. Fishman, D. Simmons, M. Urba\'nski,  \emph{Badly approximable vectors and fractals defined by conformal dynamical systems}, Math. Res. Lett. 25 (2018), no. 2, 437--467.

\bibitem{cg} L. Carleson, T. W. Gamelin, Complex Dynamics, Springer 1993.

\bibitem{dh} A. Douady, J. Hubbard, \emph{On the dynamics of polynomial-like mappings}, Ann. Sci. \'Ecole Norm. Sup., Serie 4, Volume 18 (1985), no. 2,  287--343.

\bibitem{DS} K. Drach, D. Schleicher, \emph{Rigidity of Newton dynamics}, arXiv:1812.11919v2.

\bibitem{Goluzin} G. M. Goluzin, Geometric Theory of Functions of a Complex Variable, Translation of Mathematical Monographs, Vol 26, Amer. Math. Soc. Russian Edition Izd. Nauka, 1966.

\bibitem{gj} J. Graczyk, P. Jones, \emph{Dimension of the boundary of quasiconformal Siegel discs},
Invent. math. 148 (2002), 465--493.

\bibitem{InoRiv} I. Inoquio-Renteria, J. Rivera-Letelier,  \emph{A characterization of hyperbolic potentials of rational maps}, Bull. Braz. Math. Soc. (N.S.) 43.1 (2012),  99--127.

\bibitem{strien} O. Kozlovski, S.van Strien, \emph{Local connectivity and quasi-conformal rigidity of non-renormalizable polynomials},  Proc. London Math. Soc. (3) 99 (2009), 275--296.

\bibitem{ms} N.Makarov, S. Smirnov, \emph{On thermodynamics of rational maps. II. Non-recurrent maps}, J. London Math. Soc. (2) 67 (2003), no. 2, 417--432.

\bibitem{MU} R. D, Mauldin, M. Urba\'nski, Graph Directed Markov Systems. Volume 148 of Cambridge Tracts in Mathematics.  Cambridge University Press, 2003

\bibitem{qiu} W.Qiu, Y.Yin, \emph{Proof of the Branner-Hubbard conjecture on Cantor Julia sets}, Sci. China Ser. A 52 (2009), no. 1, 45--65.

\bibitem{feliks_conical} F. Przytycki, \emph{Conical limit set and Poincar\'e exponent for iterations of rational functions}, Trans. Amer. Math. Soc. 351 (1999), no. 5, 2081--2099.

\bibitem{feliks} F. Przytycki, \emph{ On the hyperbolic Hausdorff dimension of the boundary of a basin of attraction
for a holomorphic map and of quasirepellers},  Bull.  Pol. Acad. Sci. Math. 54.1 (2006), 41--52.

\bibitem{feliks3} F. Przytycki, \emph{Riemann map and holomorphic dynamics}, Invent. math. 85 (1986), no. 3, 439--455.

\bibitem{P-ICM} F. Przytycki, \emph{Thermodynamic formalism methods in one-dimensional real and complex dynamics},
Proceedings of the International Congress of Mathematicians 2018, Rio de Janeiro, Vol.2, pp. 2081--2106.

\bibitem{PRL2} F.~Przytycki, J.~Rivera-Letelier,
\emph{Statistical properties of Topological Collet--Eckmann maps},
Ann. Sci. \'Ecole Norm. Sup., S\'erie 4 , 40 (2007), 135--178.

\bibitem{PR-L} F.~Przytycki, J.~Rivera-Letelier,
\emph{Nice inducing schemes and the thermodynamics of rational maps},
Comm. Math. Phys. 301.3 (2011), 661--707.

\bibitem{PRS}
F.~Przytycki, J.~Rivera-Letelier, and S.~Smirnov,
\emph{Equivalence and topological invariance of conditions for
non-uniform hyperbolicity in the iteration of rational maps},
Invent. math.~151 (2003), 29--63.

\bibitem{pubook} F.Przytycki, M. Urba\'nski, Conformal Fractals: Ergodic Theory Methods.
London Mathematical Society Lecture Note Series 371, Cambridge University Press, Cambridge, 2010.

\bibitem{puz} F. Przytycki, M.Urba\'nski, A. Zdunik, \emph{Harmonic, Gibbs and Hausdorff measures on repellers for holomorphic maps. I.}, Ann. of Math. (2) 130 (1989), no. 1, 1--40.

\bibitem{Shishikura} M. Shishikura, \emph{The Hausdorff Dimension of the Boundary of the Mandelbrot Set and Julia Sets},  Ann. of Math. (2) 147 (1998), no. 2,  225--267.

\bibitem{zdunik} A.Zdunik, \emph{Harmonic measure versus Hausdorff measures  on  repellers for holomorphic maps},   Trans. Amer. Math. Soc. 326.2 (1991), 633--652.

\bibitem{zdunik2} A. Zdunik, \emph{Parabolic orbifolds and the dimension of the maximal measure for rational maps}, Invent. math. 99 (1990), no. 3, 627--649.

\end{document}